\DeclareMathAlphabet{\mathpzc}{OT1}{pzc}{m}{it}
\newcommand{\ui}{[0,1]}
\newcommand{\wild}{\mathbf{w}}
\newcommand{\awild}{\mathbf{aw}}
\newcommand{\pionex}{\pi_{1}(X,x_0)}
\newcommand{\pioneh}{\pi_{1}(\bbh,b_0)}
\newcommand{\mci}{\mathcal{I}}
\newcommand{\scrc}{\mathscr{C}}
\newcommand{\scru}{\mathscr{U}}
\newcommand{\scrv}{\mathscr{V}}
\newcommand{\bbw}{\mathbb{W}}
\newcommand{\bbc}{\mathbb{C}}
\newcommand{\bbh}{\mathbb{H}}
\newcommand{\bbn}{\mathbb{N}}
\newcommand{\bbr}{\mathbb{R}}
\newcommand{\ov}{\overline}
\newtheorem{theorem}{Theorem}[section]
\newtheorem{lemma}[theorem]{Lemma}
\newtheorem{proposition}[theorem]{Proposition}
\newtheorem{corollary}[theorem]{Corollary}
\theoremstyle{definition}\newtheorem{definition}[theorem]{Definition}
\newtheorem{example}[theorem]{Example}
\newtheorem{remark}[theorem]{Remark}
\begin{document}
\title{Transfinite product reduction in fundamental groupoids}
\author{Jeremy Brazas}
\address{West Chester University\\Department of Mathematics\\ 25 University Avenue\\ West Chester, PA 19383}
\email{jbrazas@wcupa.edu}
\date{\today}

\begin{abstract}
Infinite products, indexed by countably infinite linear orders, arise naturally in the context of fundamental groupoids. Such products are called ``transfinite" if the index orders are permitted to contain a dense suborder and are called ``scattered" otherwise. In this paper, we prove several technical lemmas related to the reduction (i.e. combining of factors) of transfinite products in fundamental groupoids. Applying these results, we show that if the transfinite fundamental group operations are well-defined in a space $X$ with a scattered algebraic $1$-wild set $\mathbf{aw}(X)$, then all transfinite fundamental groupoid operations are also well-defined. 
\end{abstract}

\maketitle

\section{Introduction}

Fundamental group(oid)s of non-locally contractible spaces provide natural models of group(oid) structures enriched with non-commutative infinite product operations. The use of such infinite products is ubiquitous in the progressive literature on the homotopy and homology groups of such spaces, e.g. \cite{CChegroup,CHM,CK,E99freesubgroups,Edasingularonedim16,KentHomomorphisms}. For example, fundamental group(oid)s of one-dimensional spaces \cite{EdaSpatial,Edaonedim,EK98,FZ013caley} are very much infinitary extensions of fundamental group(oid)s of graphs since path-homotopy classes have ``reduced" representatives that are unique up to reparameterization \cite{CConedim}.

This paper seeks to develop general techniques for operating with these kinds of topologically-influenced algebraic structures. In particular, the current paper is dedicated to proving several results that involve reducing (i.e. combining factors in) such products. As in infinitary logic, we use the term ``infinitary" to refer to partially defined operations with infinitely many inputs (akin to infinite sums and products in $\bbc$) as opposed to binary, trinary, and other finitary operations appearing in standard algebraic fields. 

Prior to stating our main results, we clarify the way in which infinitary operations on paths and homotopy classes extend finite composition in ordinary categories. Suppose that $\scrc$ is a category where we choose to write the composition operation as a product operation (as in a monoid or group). By iteratively applying the face maps in the nerve of $\scrc$, we may reduce any factorization
\[\xymatrix{
X_0 \ar[r]^-{a_1} & X_1 \ar[r]^-{a_2}& X_2 \ar[r] & \cdots \ar[r] & X_{n-1} \ar[r]^-{a_{n}} & X_n
}\]
of a morphism $\alpha=a_1a_2a_3\cdots a_n$ to a ``reduced" factorization of $\alpha$:
\[\xymatrix{
X_{i_0} \ar[r]^-{b_1} & X_{i_1} \ar[r]^-{b_2}& X_{i_2} \ar[r] & \cdots \ar[r] & X_{i_{m-1}} \ar[r]^-{b_{m}} & X_{i_m}
}\]
where $0=i_0<i_1<i_2<\cdots <i_{m-1}<i_m=n$ and $b_j=a_{i_{j-1}+1}a_{i_{j-1}+2}\cdots a_{i_{j}}$. Formally, the original source/target sequence $X_0,X_1,X_2,\dots, X_n$ is a function $f:A\to Ob(\scrc)$, $i\mapsto X_i$ where $A=\{0,1,2,\dots,n\}$. The source/target sequence $X_{i_0},X_{i_1},X_{i_2},\dots, X_{i_m}$ for the reduced factorization is the restriction $f|_{B}$ where $B$ is obtained by deleting $A\cap(i_{j-1},i_j)$ for all $1\leq j\leq m$. To define an infinitary analogue of finite product reduction, we replace $\scrc$ with a topological category of paths in a space $X$ and appeal to linear order theory.

A \textit{cut-set} is a compact nowhere dense subset $A\subseteq\bbr$ and a \textit{cut-map} in a topological space $X$ is a continuous function $A\to X$ on a cut-set. Given a path $\alpha:[a,b]\to X$ and cut-set $A$ with $[\min(A),\max(A)]=[a,b]$, the restriction $\alpha|_{A}:A\to X$ is a cut-map and we refer to the pair $(\alpha,A)$ as a \textit{factorization} of $\alpha$. For a finite cut-set $A=\{a=t_0,t_1,\dots ,t_n=b\}$, the factorization $(\alpha,A)$ may be represented, uniquely up to reparameterization, by the $n$-fold concatenation $\prod_{i=1}^{n}\alpha|_{[t_{i-1},t_i]}$. If $A$ is infinite, the factorization $(\alpha,A)$ may still be represented, uniquely up to reparameterization, as an infinite concatenation of subpaths $\alpha|_{\ov{I}}$ indexed by the countably infinite linearly ordered set $\mci(A)=\{I\mid I\text{ is a component of }[a,b]\backslash A\}$. In this way, the notion of ``cut-map" is a non-discrete extension of source/target sequences in ordinary categories.

In many situations, it is helpful to represent an infinite factorization $(\alpha,A)$ using product notation: $\alpha=\prod_{I\in\mci(A)}\alpha|_{\ov{I}}$ \textit{over cut-map} $\alpha|_{A}$. However, we cannot overstate the importance of cut-maps (not represented in product notation) when dealing with infinite factorizations. For example, if $C$ is the middle third Cantor set, then there are uncountably many points $t\in C$, which are not the boundary point of an interval $I\in\mci(C)$ but which may be crucial to the structure of factorizations over a cut-map $C\to X$. Specifically, if $\alpha:\ui\to\ui$ is the standard Cantor function, then we have $\alpha=\prod_{I\in\mci(C)}\alpha|_{\ov{I}}$ over an onto cut-map $\alpha|_{C}:C\to \ui$ where each path $\alpha|_{\ov{I}}$ is constant at a dyadic rational, yet $\alpha$ is a path from $0$ to $1$. Hence, due to the role of cut-maps, we cannot jump to the tempting conclusion that a path represented in product notation as $\prod_{I\in\mci(A)}\alpha|_{\ov{I}}$ is constant if all of the factors $\alpha|_{\ov{I}}$, $I\in\mci(A)$ are constant.

A \textit{reduction} of a factorization $(\alpha,A)$ is a factorization $(\alpha,B)$ such that $B\subseteq A$. Analogous to finitary product reduction in categories, the reduced factorization $\prod_{J\in\mci(B)}\alpha|_{\ov{J}}$ is formed by combining convex collections of the factors in $\prod_{I\in\mci(A)}\alpha|_{\ov{I}}$, namely, we have $\alpha|_{\ov{J}}=\prod_{I\in\mci(A),I\subseteq J}\alpha_{\ov{I}}$ formed over cut-map $\alpha|_{A\cap\ov{J}}$. To avoid the pitfalls of infinite product notation, we will typically view $B$ as simply being obtained from $A$ by deleting $A\cap J$ for every component $J\in\mci(B)$. 

The situation for path-homotopy classes is more delicate. The primary difficulty we face is determining when reductions preserve the path-homotopy relation. Throughout, we use $\pi_1$ to refer to the fundamental group and $\Pi_1$ to refer to the fundamental groupoid. We will say that a space $X$ has
\begin{enumerate}
\item \textit{well-defined transfinite $\Pi_1$-products} if for any cut-set $A$ and paths $\alpha,\beta:\ui\to X$ such that $\alpha|_{A}=\beta|_{A}$ and $\alpha|_{\ov{I}}\simeq  \beta|_{\ov{I}}$ for all $I\in\mci(A)$, we have $\alpha\simeq\beta$.
\item \textit{well-defined transfinite $\pi_1$-products at }$x\in X$ if for any cut-set $A$ and loops $\alpha,\beta:(\ui,A)\to (X,x)$ such that $\alpha|_{\ov{I}}\simeq  \beta|_{\ov{I}}$ for all $I\in\mci(A)$, we have $\alpha\simeq\beta$. If $X$ has well-defined transfinite $\pi_1$-products at all of its points, we say $X$ has \textit{well-defined transfinite $\pi_1$-products}.
\end{enumerate}
Since every loop is a path, it is clear that (1) $\Rightarrow$ (2). If $X$ has well-defined transfinite $\Pi_1$-products (resp. well-defined transfinite $\pi_1$-products at $x\in X$), then for a fixed cut-map $A\to X$, the definition $\prod_{I\in\mci(A)}[\alpha|_{\ov{I}}]:=\left[\prod_{I\in\mci(A)} \alpha|_{\ov{I}}\right] $ is a well-defined infinitary operation on the fundamental groupoid $\Pi_1(X)$ (resp. on $\pi_1(X,x)$). However, despite the fact that group properties typically pass to their groupoid counterparts, it is shown in \cite[Theorem 1.1]{BrazDense} that there exists a locally path-connected metric space $X$ for which $X$ has well-defined transfinite $\pi_1$-products but not well-defined transfinite $\Pi_1$-products. In this paper, we bridge the gap between these infinitary group and groupoid operations to the greatest extent possible. We consider the following theorem to be the culminating result of this paper. Recall that a space $Y$ is \textit{scattered} if every non-empty subspace of $Y$ contains an isolated point. 

\begin{theorem}\label{mainthm}
Suppose $X$ is a space in which the subspace of points $\awild(X)$ at which there exists an infinite concatenation of non-contractible loops is scattered. Then  $X$ has well-defined transfinite $\pi_1$-products if and only if $X$ has well-defined transfinite $\Pi_1$-products.
\end{theorem}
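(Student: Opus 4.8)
The implication ``well-defined transfinite $\Pi_1$-products $\Rightarrow$ well-defined transfinite $\pi_1$-products'' is immediate, since a loop is a path and the $\Pi_1$-condition specialized to loops based at a point is exactly the $\pi_1$-condition at that point. So I would assume that $X$ has well-defined transfinite $\pi_1$-products, that $\awild(X)$ is scattered, and that $A$, $\alpha$, $\beta$ are given with $\alpha|_A=\beta|_A$ and $\alpha|_{\ov I}\simeq\beta|_{\ov I}$ for all $I\in\mci(A)$, and prove $\alpha\simeq\beta$. The first step is to localize the difficulty: fixing homotopies $H_I\colon\alpha|_{\ov I}\simeq\beta|_{\ov I}$ rel endpoints together with the $s$-constant homotopy on $A\times\ui$ (legitimate because $\alpha|_A=\beta|_A$), one gets a function $\ui\times\ui\to X$ that is automatically continuous off $L\times\ui$, where $L\subseteq A$ is the set of two-sided limit points of $A$. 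Since the $H_I$ need not have small image, I would not try to repair this gluing directly; instead I would work with \emph{reductions of the cut-set}, aiming to reduce $A$ --- without disturbing the hypothesis $\alpha|_{\ov I}\simeq\beta|_{\ov I}$ --- to a cut-set from which $\alpha\simeq\beta$ follows formally (a finite cut-set, or one assembled from finitely many blocks each of which is a finite concatenation or a transfinite product of loops at a single point). This is exactly the setting of the transfinite reduction lemmas proved in the earlier sections.

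The central subroutine is a local model at a point. Let $t_0\in L$ and $q=\alpha(t_0)=\beta(t_0)$; I claim the restrictions of $\alpha$ and $\beta$ to any sufficiently small one-sided closed neighborhood $[t_0,c]$ with $c\in A$ are homotopic rel endpoints. To see this, take cut points $t_0<\dots<t_2<t_1=c$ with $t_n\to t_0$, insert the automatically small basepoint-paths $\delta_n:=\alpha|_{[t_0,t_n]}$ (whose images shrink to $\{q\}$ by continuity of $\alpha$), and telescope to express $\alpha|_{[t_0,c]}$, up to path-homotopy, as $\big(\prod_n\mu_n^{\alpha}\big)\cdot\delta_1$, where $\mu_n^{\alpha}=\delta_{n+1}\cdot(\alpha|_{\ov{I_n}})\cdot\ov{\delta_n}$ is a loop at $q$ shrinking to $q$; the same manipulation with the \emph{same} $\delta_n$ gives $\beta|_{[t_0,c]}\simeq\big(\prod_n\mu_n^{\beta}\big)\cdot\delta_1$. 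Because $\alpha|_{\ov{I_n}}\simeq\beta|_{\ov{I_n}}$ rel endpoints we have $\mu_n^{\alpha}\simeq\mu_n^{\beta}$ rel $q$, so well-defined transfinite $\pi_1$-products \emph{at the single point $q$} give $\prod_n\mu_n^{\alpha}\simeq\prod_n\mu_n^{\beta}$, whence $\alpha|_{[t_0,c]}\simeq\beta|_{[t_0,c]}$. Note that this uses only the $\pi_1$-hypothesis at $q$, which holds at every point, and that the telescoping and insertion steps are instances of the reduction lemmas; scatteredness plays no role yet.

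For the global assembly I would induct on the Cantor--Bendixson rank of $\awild(X)$. Using the local model together with a deletion-of-contractible-factors lemma, one first collapses a neighborhood of each $t_0\in L$ with $\alpha(t_0)\notin\awild(X)$ --- at such a point the loops $\mu_n^{\alpha}$ above are eventually null-homotopic by the very definition of $\awild(X)$, so the offending cut points can be removed --- reducing to the case where every two-sided limit point of $A$ maps into $\awild(X)$. For the inductive step, let $\{q_i\}$ be the points isolated in the top non-empty derived set of $\awild(X)$, choose pairwise-disjoint open $U_i\ni q_i$ meeting that derived set only at $q_i$, and for each $i$ use the local model at $q_i$ to absorb, up to homotopy, the portions of $A$ accumulating at points mapping to $q_i$, replacing each such portion by a single factor homotopic to the corresponding portion of $\beta$ (a sub-cut-set on which $\alpha$ is constantly $q_i$ being handled directly as a transfinite product of loops at $q_i$ via the $\pi_1$-hypothesis at $q_i$). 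After these collapses every surviving two-sided limit point maps into $\awild(X)$ with the points $q_i$ removed --- a scattered set of strictly smaller Cantor--Bendixson rank --- so the inductive hypothesis finishes the step; the base case ($\awild(X)$ empty or discrete) terminates in a finite cut-set, for which $\alpha\simeq\beta$ is just compatibility of finite composition with homotopy.

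I expect the main obstacle to be the bookkeeping of this induction rather than any single homotopy. One must check that every collapse is an honest path-homotopy and not merely a homotopy of the associated product symbols --- this is precisely what the transfinite reduction lemmas are built to certify --- that the blocks collapsed at successive ranks are chosen compatibly so that the procedure is well-defined, and, crucially, that the process terminates, which is exactly where the scatteredness of $\awild(X)$ is indispensable; its failure is what allows the counterexample of \cite{BrazDense}. A secondary technical nuisance is that $\awild(\cdot)$ is not monotone under passage to subspaces, so the induction cannot be run by shrinking the ambient space: it must be phrased for the whole class of spaces whose algebraic $1$-wild set is scattered of rank below a given ordinal, with $X$ itself fixed at each stage.
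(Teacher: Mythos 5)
Your ``local model'' subroutine is in fact sound: conjugating the factors near a one-sided accumulation point $t_0$ by the shrinking subpaths $\delta_n=\alpha|_{[t_0,t_n]}$, checking that the resulting concatenations are continuous, cancelling the inserted pairs $\ov{\delta_n}\cdot\delta_n$ by a single homotopy whose blocks have images $\alpha([t_0,t_n])$ shrinking to $q$, and then applying well-defined transfinite $\pi_1$-products at the single point $q$ does yield $\alpha|_{[t_0,c]}\simeq\beta|_{[t_0,c]}$ --- \emph{provided} the pieces $\alpha|_{[t_{n+1},t_n]}\simeq\beta|_{[t_{n+1},t_n]}$ are already known. That proviso is exactly where your global assembly breaks down and where the paper's work actually lives. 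Your plan performs these local collapses at (possibly uncountably many) two-sided limit points simultaneously and asserts that ``the offending cut points can be removed,'' but removing points of $A$ near each such $t_0$ and certifying that what survives is still a \emph{homotopy} cut-set is not a pointwise matter: the gaps between the chosen $t_n$ generally contain further (possibly perfect) portions of $A$, homotopies of composites do not split into homotopies of the pieces, and the set of cut points mapping into $\awild(X)$ need not even be closed (Example \ref{example2}), which is why the paper needs the doubled-interval space $\bbw_A$, the subdivision Lemma \ref{subdivisionlemma}, and Lemma \ref{scatteruplemma} to carry out the reduction of Lemma \ref{awildsetreductiongoodlemma}, rather than a local argument at each point. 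Likewise your base case is not ``a finite cut-set'': even when $\awild(X)=\emptyset$ the cut-set can be a Cantor set, and one needs the sup/connectedness argument of Lemma \ref{nonwildlemma1} (or an equivalent) to finish.

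The second, structural gap is the induction itself. As you concede, inducting on the Cantor--Bendixson rank of $\awild(X)$ while keeping $X$ fixed does not type-check, since $\awild(X)$ never changes; repairing this forces the induction onto the pair (cut-set, image), and then the successive collapses at the isolated points of the relevant derived set must be interleaved with domain-side (scattered/perfect-kernel) reductions and must be controlled at limit stages --- this is precisely the alternating transfinite reduction of Theorem \ref{bigtheorem}, with Lemma \ref{inductivesteplemma} handling an isolated image point (already delicate because preimages of distinct $q_i$ interleave in the domain, and your pairwise-disjoint neighborhoods $U_i$ are unavailable for the arbitrary, possibly non-Hausdorff, spaces the theorem covers), Lemma \ref{limitordinallemma} handling limit ordinals, and stabilization of a descending sequence of closed subsets replacing your appeal to termination. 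The author explicitly remarks that he could not run the image-side and domain-side reductions in separate phases, which is essentially what your two-phase rank induction proposes; the paper instead terminates with Theorem \ref{maintechthm} and the observation that a dense-in-itself subset of the scattered set $\awild(X)$ is empty (after a whisker is attached to secure a basepoint outside $\awild(X)$ with $\{x_0\}$ closed). So while your easy direction and local subroutine are correct, the transfinite bookkeeping you defer is not routine --- it is the content of Sections \ref{technicalsection}--\ref{finalsection} --- and as written your argument has genuine gaps there.
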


We remark that nearly all of our results, including Theorem \ref{mainthm}, are completely general, applying to arbitrary topological spaces. Moreover, the example in \cite[Theorem 1.1]{BrazDense} illustrates that Theorem \ref{mainthm} is very much an optimal extension of \cite[Theorem 7.13]{BFTestMap}, which only implies Theorem \ref{mainthm} in the case that $X$ is first countable and $\awild(X)$ is discrete.

It is also shown in \cite{BrazDense} that a path-connected metrizable space $X$ admits a generalized universal covering space in the sense of \cite{FZ07} if and only if $X$ has well-defined transfinite $\Pi_1$-products. Moreover, \cite[Corollary 7.15]{BFTestMap} implies that if $\pionex$ is abelian, then the well-known homotopically Hausdorff property is also equivalent to these two properties. Combining these results with Theorem \ref{mainthm}, we obtain the following.

\begin{corollary}\label{maincor1}
Suppose $X$ is path connected, metrizable, and $\awild(X)$ is scattered. Then $X$ admits a generalized universal covering space in the sense of \cite{FZ07} if and only if $X$ has well-defined transfinite $\pi_1$-products. Moreover, if $\pionex$ is abelian, then the homotopically Hausdorff property is equivalent to these two properties.
\end{corollary}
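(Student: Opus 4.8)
Corollary~\ref{maincor1} is a formal consequence of the theorem: for path-connected metrizable $X$, \cite{BrazDense} identifies ``admits a generalized universal covering space in the sense of \cite{FZ07}'' with ``has well-defined transfinite $\Pi_1$-products,'' and \cite[Corollary 7.15]{BFTestMap} adds the homotopically Hausdorff property to this list of equivalent conditions when $\pionex$ is abelian; chaining these with the equivalence supplied by Theorem~\ref{mainthm} (which applies since $\awild(X)$ is assumed scattered) yields the Corollary. So the content lies in Theorem~\ref{mainthm}, and I would approach its proof as follows.

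One implication is immediate: well-defined transfinite $\Pi_1$-products imply well-defined transfinite $\pi_1$-products because every loop is a path. For the converse, assume $X$ has well-defined transfinite $\pi_1$-products and that $W:=\awild(X)$ is scattered, and suppose $A$ is a cut-set and $\alpha,\beta:\ui\to X$ satisfy $\alpha|_A=\beta|_A$ and $\alpha|_{\ov I}\simeq\beta|_{\ov I}$ for every $I\in\mci(A)$; the goal is $\alpha\simeq\beta$. The plan is a transfinite induction on the Cantor--Bendixson rank of $W$, with the reduction lemmas of the body of the paper doing the work at each stage.

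The base case is $W=\emptyset$. Here the key ingredient should be a reduction lemma to the effect that if $t$ is an accumulation point of $\mci(A)$ at which infinitely many of the factors accumulating at $t$ are non-contractible, then $\alpha(t)\in W$ --- intuitively, one ``closes up'' those factors into genuine non-contractible loops based at $\alpha(t)$. With $W=\emptyset$ this means every such accumulation can be erased by a reduction, so $(\alpha,A)$ and $(\beta,A)$ reduce to finite factorizations and the finitely many homotopies $\alpha|_{\ov I}\simeq\beta|_{\ov I}$ glue by the ordinary finitary concatenation argument to give $\alpha\simeq\beta$. For the inductive step, let $W^{(\sigma)}$ be the top nonempty Cantor--Bendixson derivative of $W$ (so $W^{(\sigma)}$ is discrete in $W$); after a preliminary reduction one may arrange that only finitely many parameters $t_1<\dots<t_k$ are accumulation points of $\mci(A)$ with $\alpha(t_j)\in W^{(\sigma)}$, say $\alpha(t_j)=x_j$. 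On a small closed neighborhood $N_j=[t_j-\delta,t_j+\delta]$ of each $t_j$ I would ``pinch'' $\alpha$ and $\beta$ at $t_j$ so that the factorizations of $\alpha|_{N_j}$ and $\beta|_{N_j}$ become transfinite concatenations of loops based at $x_j$, and then invoke a technical lemma upgrading ``well-defined transfinite $\pi_1$-products at $x_j$'' to the statement that any two factorizations localized near $x_j$ with a common cut-map and path-homotopic factors have path-homotopic totals; this yields $\alpha|_{N_j}\simeq\beta|_{N_j}$. The complementary subpaths obtained by deleting the interiors of the $N_j$ have factorizations whose accumulation behavior involves only $W\setminus W^{(\sigma)}$, a scattered set of strictly smaller Cantor--Bendixson rank, so the inductive hypothesis applies there; a finitary gluing over the resulting $2k+1$ pieces completes the step.

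I expect the main obstacle to be this ``pinch-and-upgrade'' step. First, it must be executed entirely at the level of cut-maps rather than in product notation: as the Cantor-function discussion in the introduction shows, the individual factors do not determine the concatenation, so the common cut-map $\alpha|_A=\beta|_A$ must be carried along and each reduction checked to respect it. Second, converting the based-loop hypothesis at $x_j$ into a statement flexible enough to absorb factors whose endpoints merely converge to $x_j$ is the technical heart of the argument and is exactly where the paper's reduction lemmas should be deployed; it is also the point at which the first-countability hypothesis of \cite[Theorem 7.13]{BFTestMap} must be removed, since one can no longer work along a fixed countable neighborhood base at $x_j$. Finally, some bookkeeping is needed so that the finitely many local homotopies produced at each stage, and across the possibly transfinitely many stages of the induction, assemble into a single path-homotopy between $\alpha$ and $\beta$.
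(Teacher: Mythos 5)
Your reduction of the corollary itself is exactly the paper's: the equivalence of ``admits a generalized universal covering space'' with well-defined transfinite $\Pi_1$-products for path-connected metrizable $X$ is quoted from \cite{BrazDense}, the homotopically Hausdorff addendum for abelian $\pionex$ from \cite[Corollary 7.15]{BFTestMap}, and the rest is Theorem \ref{mainthm}. Since Theorem \ref{mainthm} is proved in the paper, this already settles the corollary, and your first paragraph is a complete and correct proof of the stated result.

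The appended sketch of Theorem \ref{mainthm}, however, takes a genuinely different route from the paper and has real gaps. The paper does not induct on the Cantor--Bendixson rank of $\awild(X)$; it performs an alternating transfinite sequence of reductions of the \emph{cut-set} $A$ itself (Theorem \ref{bigtheorem}): delete the parameters mapping to an isolated point of the image $\alpha(B_\lambda)$ (Lemma \ref{inductivesteplemma}), pass to the Cantor--Bendixson perfect kernel of the resulting cut-set (Lemma \ref{perfectreductionlemma}), handle limits via Lemma \ref{limitordinallemma}, and use second countability to stabilize; after first pushing the interior image into $\awild(X)$ (Lemma \ref{awildsetreductiongoodlemma}) one ends with a cut-set whose interior image is dense-in-itself inside the scattered set $\awild(X)$, hence empty. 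In your scheme the base case $W=\emptyset$ does not ``reduce to finite factorizations'': even with $\awild(X)=\emptyset$ the cut-set $A$ can be a Cantor set, and the paper instead needs the sup/connectedness argument of Lemma \ref{nonwildlemma1} (or Theorem \ref{scatteredproducthm}) to conclude. In the inductive step, the claim that only finitely many parameters are accumulation points of $\mci(A)$ with image in the top derivative $W^{(\sigma)}$ is unjustified: $\alpha|_A^{-1}(x)$ for a single $x\in W^{(\sigma)}$ can itself be a perfect subset of $A$, and $\alpha(A)$ may meet $W^{(\sigma)}$ in infinitely many points, so the finitely many intervals $N_j$ and the ``pinch'' to based loops at $x_j$ need not exist; the paper's Lemma \ref{inductivesteplemma} works instead with a point that is isolated in $\alpha(A)$, where a transition-component argument genuinely yields finiteness. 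Indeed the paper remarks that running the scattered reduction and the $\pi_1$-based reduction as separate inductions (which is essentially what a rank induction on $\awild(X)$ attempts) did not succeed, which is why the alternating induction on cut-sets is used. So your corollary proof stands as is, but the sketch should not be read as a substitute for the paper's proof of Theorem \ref{mainthm}.
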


The remainder of this paper is structured as follows. In Section \ref{wildnesssection}, we settle notation and study the difference between the set $\wild(X)$ of points at which $X$ is not semilocally simply connected and the set $\awild(X)$ of points in $X$ that cause algebraic wildness in the fundamental group of $X$. Our use of $\awild(X)$ in the place of $\wild(X)$ is precisely what allows us to prove our results for arbitrary spaces. In Section \ref{homotopycutsetsection}, we define the notion of a ``homotopy cut-set" for a pair of paths $\alpha$ and $\beta$ that agree on a cut-set $A$ and recall known well-definedness results. In Section \ref{technicalsection}, we prove several technical lemmas that identify situations where the reduction of transfinite factorizations is possible. Using the results from Section \ref{technicalsection}, we prove Theorem \ref{maintechthm} (our main technical theorem) and Theorem \ref{mainthm} in Section \ref{finalsection}.

\section{Sets of 1-dimensional wildness}\label{wildnesssection}

A \textit{path} is a continuous function $\alpha:[s,t]\to X$, which we call a \textit{loop based at} $x\in X$ if $\alpha(s)=\alpha(t)=x$. If $[a,b],[c,d]\subseteq \ui$ and $\alpha:[a,b]\to X$, $\beta:[c,d]\to X$ are maps, we write $\alpha\equiv\beta$ if $\alpha=\beta\circ \phi$ for some increasing homeomorphism $\phi: [a,b]\to [c,d]$; if $\phi$ is linear and if it does not create confusion, we will identify $\alpha$ and $\beta$. We write $\alpha\simeq\beta$ if two paths are path-homotopic. If a loop $\alpha$ is (resp. is not) path-homotopic to a constant loop, we may simply say that $\alpha$ is \textit{trivial} (resp. \textit{non-trivial}).

If $\alpha:\ui\to X$ is a path, then $\alpha^{-}(t)=\alpha(1-t)$ is the reverse path. If $\alpha_1,\alpha_2,\dots,\alpha_n$ is a sequence of paths such that $\alpha_{j}(1)=\alpha_{j+1}(0)$ for each $j$, then $\prod_{j=1}^{n}\alpha_j=\alpha_1\cdot \alpha_2\cdot\;\cdots\;\cdot \alpha_n$ is the $n$-fold \textit{concatenation} defined as $\alpha_j$ on $\left[\frac{j-1}{n},\frac{j}{n}\right]$. An infinite sequence $\alpha_1,\alpha_2,\alpha_3,\dots$ of paths in $X$ is a \textit{null sequence} if $\alpha_n(1)=\alpha_{n+1}(0)$ for all $n\in\bbn$ and there is a point $x\in X$ such that every neighborhood of $x$ contains $\alpha_n([0,1])$ for all but finitely many $n\in\bbn$. Given such a null-sequence, one may form the \textit{infinite concatenation} $\prod_{n=1}^{\infty}\alpha_n$, defined to be $\alpha_n$ on $\left[\frac{n-1}{n},\frac{n}{n+1}\right]$ and mapping $1$ to $x$.

\begin{definition}
The \textit{topological $1$-wild set} of a space $X$ is the subspace $$\wild(X)=\{x\in X\mid X\text{ is not semilocally simply connected at }x\}.$$
\end{definition}

According to classical covering space theory \cite{Spanier66}, if $X$ is path connected and locally path connected, then $\wild(X)$ is the topological obstruction to the existence of a simply connected covering space over $X$. This set plays a key role in automatic continuity and homotopy classification results for one-dimensional \cite{EdaSpatial,Edaonedim} and planar \cite{CK,KentHomomorphisms} continua since the homeomorphism type of the topological 1-wild set is a homotopy invariant within these classes of spaces \cite[Theorem 9.13]{BFThawaiianpants}.

\begin{lemma}\cite[Proposition 7.7]{BFTestMap}\label{wildisclosedlemma}
If $Y$ is locally path connected and $f:Y\to X$ is a map, then $f^{-1}(\wild(X))$ is closed in $Y$. In particular, if $X$ is locally path connected, then $\wild(X)$ is closed in $X$.
\end{lemma}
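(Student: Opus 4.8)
The plan is to show the contrapositive locally: if $y \in Y$ is such that $f(y) \notin \wild(X)$, then a whole neighborhood of $y$ is mapped outside $\wild(X)$, which shows $Y \setminus f^{-1}(\wild(X))$ is open. So fix $y_0 \in Y$ with $x_0 = f(y_0) \notin \wild(X)$; by definition this means $X$ is semilocally simply connected at $x_0$, i.e. there is an open neighborhood $U \ni x_0$ such that the map $\pionex \to \pi_1(X,x_0)$ — more precisely, such that every loop in $U$ based at $x_0$ is null-homotopic in $X$. Since $f$ is continuous and $Y$ is locally path connected, I would choose a path-connected open neighborhood $V$ of $y_0$ with $f(V) \subseteq U$. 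The claim is then that $f(v) \notin \wild(X)$ for every $v \in V$, witnessed by the neighborhood $W = f(V)$-image... but $W$ need not be open in $X$, so instead the correct witness is an open set: one takes any loop in $W$ and must kill it in $X$. Let me restructure.

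The key step is this. Fix $v \in V$ and set $x = f(v)$; I want a neighborhood of $x$ in $X$ through which $\pi_1$ dies into $\pi_1(X,x)$. Take the open neighborhood $U$ of $x_0$ above, and let $V$ be a path-connected open neighborhood of $y_0$ in $Y$ with $f(V) \subseteq U$; choose also a path-connected open neighborhood $V'$ of $v$ in $Y$ with $V' \subseteq V$ (using local path connectedness again), and let $N = \operatorname{int}_X(\overline{f(V')})$ — no. The clean approach: I don't get an open set in $X$ directly, but semilocal simple connectivity at $x$ only requires \emph{some} open $N \ni x$ with $\ker(\pi_1(N,x) \to \pi_1(X,x))$ everything. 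Since $f$ is continuous, $f^{-1}(N)$ contains an open nbhd of $v$; conversely, to produce such an $N$, note that any loop $\gamma$ based at $x$ with image in the \emph{subset} $f(V)$ can be analyzed as follows: pick a path $\delta$ in $V$ from $y_0$ to $v$ (possible since $V$ is path-connected); then $f\delta$ is a path in $U$ from $x_0$ to $x$, and $(f\delta) \cdot \gamma \cdot (f\delta)^{-}$ is a loop in $U$ based at $x_0$, hence null-homotopic in $X$; therefore $\gamma$ is null-homotopic in $X$. This shows: \emph{every loop in $X$ based at $x$ with image contained in $f(V)$ is null-homotopic in $X$}. Now since $f$ is continuous, $f^{-1}(X)$... we need an \emph{open} $N$ with image condition; take $N$ to be any open subset of $X$ with $x \in N$ and $N \subseteq f(V)$ if one exists — it may not.

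The main obstacle, then, is exactly that $f(V)$ is not open in $X$, so "loops with image in $f(V)$ die" does not immediately give semilocal simple connectivity at $x$. I would resolve this by using that $Y$ is locally path connected more carefully: the condition "$X$ is semilocally simply connected at $x$" does not actually require the witnessing set to be open — but the standard definition does, so instead I argue directly that $Y \setminus f^{-1}(\wild(X))$ is open by showing each of its points is interior. Given $y_0$ with $f(y_0) \notin \wild(X)$, take the open $U \ni f(y_0)$ killing $\pi_1$ in $X$, and a path-connected open $V \ni y_0$ with $f(V) \subseteq U$. For any $v \in V$: the argument above (conjugating a loop at $f(v)$ with image in $f(V) \subseteq U$ by $f$ of a path in $V$) shows every loop at $f(v)$ with image in $U$ — here is the fix: image in $U$, not merely in $f(V)$ — dies in $X$, because $f(V) \subseteq U$ and the conjugating path $f\delta$ also lies in $U$; so $U$ itself witnesses that $X$ is semilocally simply connected at $f(v)$. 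Hence $f(v) \notin \wild(X)$, so $V \subseteq Y \setminus f^{-1}(\wild(X))$, proving this set is open and $f^{-1}(\wild(X))$ closed. The ``in particular'' follows by taking $Y = X$ and $f = \mathrm{id}$, which is continuous and, under the hypothesis that $X$ is locally path connected, has locally path connected domain.
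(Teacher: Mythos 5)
Your final argument is correct and is the standard proof of this fact (the paper itself only cites \cite[Proposition 7.7]{BFTestMap} rather than reproving it): for each $v\in V$ the open set $U$ itself witnesses semilocal simple connectivity at $f(v)$, because any loop $\gamma$ in $U$ based at $f(v)$, when conjugated by $f\circ\delta$ for a path $\delta$ in the path-connected open set $V$ from $y_0$ to $v$, becomes a loop in $U$ based at $x_0=f(y_0)$ and hence is null-homotopic in $X$, so $\gamma$ is as well. The exploratory detours about $f(V)$ not being open can simply be deleted, since the fix you land on---taking loops with image in $U$ rather than in $f(V)$, with $U$ as the witnessing neighborhood---is exactly the needed argument, and the ``in particular'' statement via $f=\mathrm{id}$ is fine.
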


We have $x\in \wild(X)$ if there exists arbitrarily small non-trivial loops based at $x$, i.e. a net $\{\alpha_j\}_{j\in J}$ of non-trivial loops based at $x$ converging (in the compact-open topology) to the constant loop at $x$. However, the existence of ``algebraic" wildness in the fundamental group arises from the existence of a null sequence $\{\alpha_n\}_{n\in\bbn}$ of non-trivial loops based at a point so that one can form infinite concatenations such as $\prod_{n=1}^{\infty}\alpha_n$. Examples \ref{example1} and \ref{example2} below illustrate that these two concepts do not always coincide.

\begin{definition}
We say $\pi_1(X,x_0)$ \textit{is infinitary at} $x\in X$ if there exists a null-sequence $\{\alpha_n\}$ of non-trivial loops in $X$ based at $x$. We say $\pi_1(X,x_0)$ is \textit{finitary at $x\in X$} if $X$ is not infinitary at $x$.  If there exists a point at which $\pionex$ is infinitary, we say $\pi_1(X,x_0)$ is \textit{infinitary}. Otherwise, we say $\pionex$ is \textit{finitary}.
\end{definition}

Observe that the property of a fundamental group $\pi_1(X,x_0)$ being infinitary at a point $x$ is not just a property of the abstract group $\pi_1(X,x_0)$ but rather a property of the infinitary product structure of $\pi_1(X,x)$ that is embedded in $\pionex$ via path-conjugation. In particular, this property detects whether or not the local topology at $x$ affects the algebra of $\pi_1(X,x_0)$. 

\begin{definition}
The \textit{algebraic $1$-wild set} of a space $X$ is the subspace $$\awild(X)=\{x\in X\mid \pionex\text{ is infinitary at }x\}.$$
\end{definition}

Let $C_n=\{(x,y)\in\bbr^2\mid (x-1/n)^2+y^2=(1/n)^2\}$, $n\in\bbn$ and $\bbh=\bigcup_{n\in\bbn}C_n$ be the usual Hawaiian earring space with basepoint $b_0=(0,0)$. Consider the loop $\ell_n(t)=\left(\frac{1}{n}(\cos(2\pi t-\pi)+1),\frac{1}{n}\sin(2\pi t-\pi)\right)$ traversing $C_n$ in the counterclockwise direction. Clearly, $\awild(\bbh)=\wild(\bbh)=\{b_0\}$ since $\{\ell_n\}$ is a null-sequence of loops based at $b_0$ and $\bbh$ is locally contractible at all other points. The following proposition is a direct consequence of the fact that maps $f:(\bbh,b_0)\to (X,x)$ are in bijection correspondence with null sequences $\{\alpha_n\}$ of loops based at $x$ (where $\alpha_n=f\circ\ell_n$).

\begin{proposition}
We have $x\in\awild(X)$ if and only if there exists a map $f:(\bbh,b_0)\to (X,x)$ such that $f\circ\ell_n$ is non-trivial for all $n\in\bbn$.
\end{proposition}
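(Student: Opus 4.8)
The plan is to make precise the bijective correspondence between based maps $(\bbh,b_0)\to(X,x)$ and null sequences of loops at $x$ that is referenced in the sentence preceding the statement, and then to observe that this correspondence respects triviality of the individual loops.

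First I would set up the correspondence explicitly. Given a map $f:(\bbh,b_0)\to(X,x)$, put $\alpha_n=f\circ\ell_n$; since $\ell_n(0)=\ell_n(1)=b_0$, each $\alpha_n$ is a loop based at $f(b_0)=x$, and $\alpha_n(1)=x=\alpha_{n+1}(0)$. To see that $\{\alpha_n\}$ is a null sequence, let $U$ be any neighborhood of $x$; then $f^{-1}(U)$ is open in $\bbh$ and contains $b_0$, and since every neighborhood of $b_0$ in $\bbh$ contains $C_n$ for all but finitely many $n$, we get $\alpha_n([0,1])=f(C_n)\subseteq U$ for all but finitely many $n$. Conversely, given a null sequence $\{\alpha_n\}$ of loops based at $x$, define $f:\bbh\to X$ by $f(b_0)=x$ and by letting $f|_{C_n}$ be the unique map with $f|_{C_n}\circ\ell_n=\alpha_n$ (this is legitimate because $\ell_n$ induces a homeomorphism $[0,1]/\{0,1\}\to C_n$); the pieces agree at $b_0$ since every $\alpha_n$ sends $0$ and $1$ to $x$. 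Continuity of $f$ at a point of $C_n\setminus\{b_0\}$ is clear because a sufficiently small neighborhood of such a point meets only $C_n$, and continuity at $b_0$ is exactly the null-sequence condition: given a neighborhood $U$ of $x$, only finitely many circles $C_n$ fail to map into $U$, so $f^{-1}(U)$ contains a neighborhood of $b_0$. These two assignments are visibly mutually inverse, which gives the claimed bijection; moreover $\alpha_n$ and $f\circ\ell_n$ are literally the same loop under the fixed reparametrization $\ell_n$, so they have the same path-homotopy class, and in particular one is trivial if and only if the other is.

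With the correspondence in hand, the proposition is a translation of definitions. If $x\in\awild(X)$, then $\pionex$ is infinitary at $x$, so there is a null sequence $\{\alpha_n\}$ of \emph{non-trivial} loops based at $x$; the corresponding map $f:(\bbh,b_0)\to(X,x)$ then satisfies $f\circ\ell_n=\alpha_n$, which is non-trivial for every $n$. Conversely, if $f:(\bbh,b_0)\to(X,x)$ is a map with $f\circ\ell_n$ non-trivial for all $n$, then $\{f\circ\ell_n\}$ is, by the first direction of the correspondence, a null sequence of non-trivial loops based at $x$, so $\pionex$ is infinitary at $x$, i.e.\ $x\in\awild(X)$. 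The only mildly delicate point is the continuity of the glued map $f$ at $b_0$, but this is precisely equivalent to the null-sequence condition, so beyond unwinding the definitions the argument is bookkeeping.
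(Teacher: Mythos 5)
Your proof is correct and follows essentially the same route as the paper, which treats the proposition as an immediate consequence of the bijective correspondence between based maps $(\bbh,b_0)\to(X,x)$ and null sequences of loops at $x$ via $\alpha_n=f\circ\ell_n$. Your write-up simply makes explicit the gluing and continuity checks (continuity at $b_0$ coming from the null-sequence condition together with continuity of each loop at its endpoints) that the paper leaves to the reader.
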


Let $\bbh_{\geq n}=\bigcup_{k\geq n}C_k$ denote the smaller copies of $\bbh$. Since $\bbh_{\geq n}$ is a retract of $\bbh$, the group $\pi_1(\bbh_{\geq n},b_0)$ may be identified as a subgroup of $\pioneh$. If there exists a map $f:(\bbh,b_0)\to (X,x)$, natural numbers $m_1<m_2<m_3<\cdots$, and loops $\gamma_n:(\ui,\{0,1\})\to(\bbh_{m_n},b_0)$ such that $f\circ\gamma_n$ is non-trivial for all $n\in\bbn$, then the map $g:\bbh\to X$ defined by $g\circ\ell_n=f\circ\gamma_n$ witnesses the fact that $\pionex$ is infinitary at $x$. The next proposition is a direct consequence of this observation.

\begin{proposition}\label{finitarychar}
$\pionex$ is finitary at $x$ if and only if for every map $f:(\bbh,b_0)\to (X,x)$, there exists an $m\in\bbn$ such that the induced homomorphism $(f|_{\bbh_{\geq m}})_{\#}:\pi_1(\bbh_{\geq m},b_0)\to\pi_1(X,x)$ is trivial.
\end{proposition}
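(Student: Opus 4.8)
The plan is to prove both directions by contraposition, using two facts already recorded in this section: the bijective correspondence between maps $f\colon(\bbh,b_0)\to(X,x)$ and null sequences $\{\alpha_n\}$ of loops based at $x$ (under which $\alpha_n=f\circ\ell_n$), and the observation above that a map $f$, an increasing sequence $m_1<m_2<\cdots$ of natural numbers, and loops $\gamma_n\colon(\ui,\{0,1\})\to(\bbh_{\geq m_n},b_0)$ with $f\circ\gamma_n$ non-trivial together witness that $\pionex$ is infinitary at $x$.

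For the forward implication, I would argue contrapositively: assume $\pionex$ is infinitary at $x$ and fix a null sequence $\{\alpha_n\}$ of non-trivial loops based at $x$, with corresponding map $f\colon(\bbh,b_0)\to(X,x)$, so $f\circ\ell_n=\alpha_n$. For each $m\in\bbn$ the loop $\ell_m$ lies in the subspace $\bbh_{\geq m}\supseteq C_m$, and $[\ell_m]$ is a non-trivial element of $\pi_1(\bbh_{\geq m},b_0)$ since the standard retraction $\bbh_{\geq m}\to C_m$ collapsing the remaining circles carries it to a generator of $\pi_1(C_m)\cong\bbz$. As $(f|_{\bbh_{\geq m}})_{\#}([\ell_m])=[\alpha_m]\neq 1$, the homomorphism $(f|_{\bbh_{\geq m}})_{\#}$ is non-trivial for every $m$, so the right-hand condition fails for this $f$.

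For the converse, again contrapositively, suppose there is a map $f\colon(\bbh,b_0)\to(X,x)$ with $(f|_{\bbh_{\geq n}})_{\#}$ non-trivial for every $n\in\bbn$. For each $n$ pick a loop $\gamma_n\colon(\ui,\{0,1\})\to(\bbh_{\geq n},b_0)$ representing a class on which $(f|_{\bbh_{\geq n}})_{\#}$ does not vanish, so $f\circ\gamma_n$ is non-trivial. Taking $m_n=n$, the data $f$, $m_1<m_2<\cdots$, and $\{\gamma_n\}$ satisfy the hypotheses of the observation above; hence $\{f\circ\gamma_n\}$ is a null sequence of non-trivial loops based at $x$, so $\pionex$ is infinitary at $x$, as required.

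I do not expect a genuine obstacle here: the statement is essentially an unwinding of the definition of ``infinitary at $x$'' through the $\bbh$-to-null-sequence dictionary. The one point requiring a moment of care is why $\{f\circ\gamma_n\}$ qualifies as a null sequence in the converse argument; it does because $\gamma_n(\ui)\subseteq\bbh_{\geq n}$, every neighborhood of $b_0$ contains $\bbh_{\geq n}$ for all sufficiently large $n$, and $f$ is continuous with $f(b_0)=x$, so the loops $f\circ\gamma_n$ eventually lie in any prescribed neighborhood of $x$. This is precisely why the loops extracted from the non-vanishing restrictions must be arranged to be supported on the shrinking subspaces $\bbh_{\geq n}$.
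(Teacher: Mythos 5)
Your proof is correct and follows essentially the same route as the paper, which presents the proposition as a direct consequence of the preceding observation: the $\bbh$-versus-null-sequence dictionary gives the forward direction via $(f|_{\bbh_{\geq m}})_{\#}([\ell_m])=[\alpha_m]\neq 1$, and the converse by extracting loops $\gamma_n$ supported in $\bbh_{\geq n}$ with $f\circ\gamma_n$ non-trivial. Your explicit check that $\{f\circ\gamma_n\}$ is a null sequence is exactly the content of the paper's remark that such data witness infinitariness at $x$.
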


\begin{remark}
The characterization in Proposition \ref{finitarychar} suggests that the ``finitary" property is a topological version of the purely group theoretic ``noncommutatively slender" property introduced by Eda \cite{Edafreesigmaproducts}: A group $G$ is \textit{noncommutatively slender} if for every homomorphism $h:\pi_1(\bbh,b_0)\to G$, there exists an $m\in\bbn$ such that $h(\pi_1(\bbh_{\geq m},b_0))=\{1\}$. Clearly, if $\pionex$ is noncommutatively slender, then $\pionex$ is finitary.

On the other hand, we note that all noncommutatively slender groups are torsion free. For any finite group $G\neq \{1\}$, the set $Hom(\pi_1(\bbh,b_0),G)$ of homomorphisms $\pi_1(\bbh,b_0)\to G$ is known to be uncountable \cite{ConnerSpencer}. However, $\pi_1(\bigcup_{1\leq k\leq n}\bbh_k,b_0)=F_n$ is free on $n$-generators and there are only finitely many homomorphism $F_n\to G$ for given $n\in\bbn$. Therefore, since $\pi_1(\bbh,b_0)\cong \pi_1(\bbh_{\geq n+1},b_0)\ast F_n$ for all $n\in\bbn$, there must exist a homomorphism $f:\pi_1(\bbh,b_0)\to G$ such that $f(\pi_1(\bbh_{\geq n},b_0))\neq \{1\}$. We conclude that if $X$ is any semilocally simply connected space and $\pionex$ has non-trivial torsion, then $\pionex$ is finitary but fails to be noncommutatively slender.
\end{remark}

\begin{proposition}
For any space $X$, we have $\awild(X)\subseteq \wild(X)$. If $X$ is first countable, then $\awild(X)= \wild(X)$.
\end{proposition}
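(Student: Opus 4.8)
The plan is to prove the two assertions separately. For the inclusion $\awild(X)\subseteq\wild(X)$, suppose $x\in\awild(X)$. By definition there is a null-sequence $\{\alpha_n\}$ of non-trivial loops based at $x$. The null-sequence condition says that every neighborhood $U$ of $x$ contains $\alpha_n([0,1])$ for all but finitely many $n$; in particular, for \emph{every} neighborhood $U$ of $x$ there is some $n$ with $\alpha_n([0,1])\subseteq U$, and $\alpha_n$ is a non-trivial loop based at $x$. This produces arbitrarily small non-trivial loops based at $x$, which is exactly the failure of semilocal simple connectivity at $x$. Hence $x\in\wild(X)$. (Strictly, I should note that $\alpha_n$ non-trivial in $X$ implies it is non-trivial in $U$ as well is \emph{not} needed here — the definition of $\wild(X)$ only asks that $\alpha_n$ not be null-homotopic \emph{in $X$} when read as a loop with image in $U$, i.e. that $\pi_1(U,x)\to\pi_1(X,x)$ be nontrivial; but actually the standard definition of ``not semilocally simply connected at $x$'' is precisely that every neighborhood $U$ admits a loop in $U$ based at $x$ that is non-trivial in $X$, so we are done directly.)

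For the reverse inclusion under first countability, suppose $X$ is first countable and $x\in\wild(X)$; I want to show $x\in\awild(X)$. Fix a decreasing neighborhood base $U_1\supseteq U_2\supseteq U_3\supseteq\cdots$ at $x$. Since $x\in\wild(X)$, for each $n$ there is a loop $\alpha_n$ based at $x$ with $\alpha_n([0,1])\subseteq U_n$ that is non-trivial in $X$. I claim $\{\alpha_n\}$ (or a subsequence of it, reindexed) is a null-sequence. The concatenation condition $\alpha_n(1)=\alpha_{n+1}(0)=x$ holds automatically since all are loops based at $x$. For the smallness condition: given any neighborhood $V$ of $x$, there is an $N$ with $U_N\subseteq V$, and then for all $n\geq N$ we have $\alpha_n([0,1])\subseteq U_n\subseteq U_N\subseteq V$. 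Thus $\{\alpha_n\}$ is a null-sequence of non-trivial loops based at $x$, so $\pi_1(X,x_0)$ is infinitary at $x$ and $x\in\awild(X)$.

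I expect the routine part to be verifying the definitions line up; the only place requiring a little care is the direction $\wild(X)\subseteq\awild(X)$, specifically the appeal to first countability to extract an actual \emph{sequence} $\{\alpha_n\}$ (indexed by $\bbn$) whose images shrink into every neighborhood of $x$ — without first countability one only gets a net of small non-trivial loops, which need not assemble into an infinite concatenation, and indeed Examples \ref{example1} and \ref{example2} are promised to show the inclusion can be strict. So first countability is used exactly once, to convert ``arbitrarily small'' (net-theoretic) into ``null-sequence'' (sequential). No genuine obstacle is anticipated beyond keeping this distinction straight.
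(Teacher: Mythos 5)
Your proof is correct and takes essentially the same approach as the paper: where the paper packages both directions through maps $(\bbh,b_0)\to(X,x)$, you work directly with the null-sequence definition of ``infinitary,'' but these are identified in the paper and the substance is identical (shrinking images of non-trivial loops give $x\in\wild(X)$ in one direction; a nested countable base yields a null sequence of non-trivial loops in the other).
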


\begin{proof}
If $x\in \awild(X)$, there exists a map $f:(\bbh,b_0)\to (X,x)$ such that $f\circ\ell_n$ is non-trivial for all $n\in\bbn$. Let $U$ be a neighborhood of $x$. Using the continuity of $f$, we can find $m\in\bbn$ such that $f(\bbh_{\geq m})\subseteq U$. In particular $f\circ\ell_m$ has image in $U$ but is non-trivial in $X$. Since every neighborhood of $x$ contains a loop that is non-trivial in $X$, we have $x\in\wild(X)$. For the second statement, suppose $X$ is first countable and $x\in\wild(X)$. Consider a countable neighborhood base $U_1\supseteq U_2\supseteq U_3\supseteq \cdots$ at $x$. By assumption, there exists a loop $\gamma_n$ in $U_n$ based at $x$ that is non-trivial in $X$. The map $f:\bbh\to X$ defined by $f\circ\ell_n=\gamma_n$ is continuous (since the basis $\{U_n\}$ is nested) and witnesses the fact that $x\in\awild(X)$.
\end{proof}

\begin{example}\label{example1}
Let $X$ be the reduced suspension of the first uncountable compact ordinal $\omega_1+1=\omega_1\cup\{\omega_1\}$ with basepoint $\omega_1$. We take $x_0$ to be the canonical basepoint of $X$. The underlying set of $X$ may be identified with a one-point union of circles $\bigvee_{\alpha\in\omega_1}C_{\alpha}$ indexed by the set of countable ordinals. Every neighborhood of $x_0$ contains $\bigcup_{\beta\leq\alpha<\omega_1}C_{\alpha}$ for some $\beta\in\omega_1$ and each circle $C_{\alpha}$ is a retract of $X$. Since $X$ is semilocally simply connected at all points in $X\backslash \{x_0\}$, we conclude that $\wild(X)=\{x_0\}$. However, using the fact that no sequence of countable ordinals converges to $\omega_1$, one can show that no path in $X$ maps onto $\bigvee_{\alpha\in S}C_{\alpha}$ for an infinite subset $S\subset\omega_1$. It follows that $\pionex$ is finitary, i.e. $\awild(X)=\emptyset$. Moreover, $\pi_1(X,x_0)$ is isomorphic to the free group $F(\omega_1)$ on uncountably many generators since the above argument extends to show that the canonical continuous bijection $\bigvee_{\alpha\in\omega_1}S^1\to X$ from the wedge of circles with the CW-topology is a weak homotopy equivalence. 
\end{example}

\begin{figure}[H]
\centering \includegraphics[height=2in]{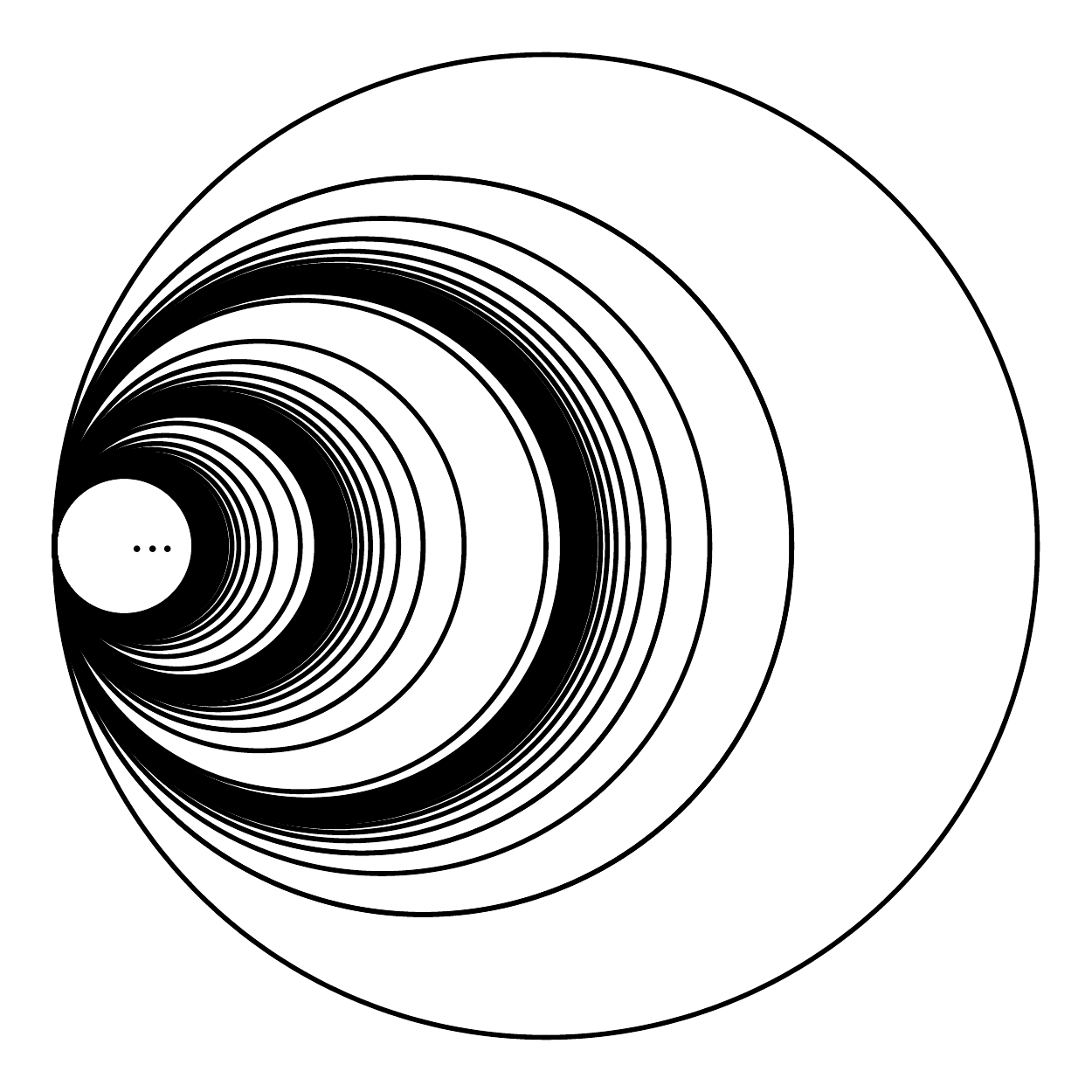}
\caption{The subspace of $X$ in Example \ref{example1} including the circles corresponding to $\alpha<\omega+\omega+\omega$.}
\end{figure}

\begin{example}\label{example2}
Let $H_n$, $n\in\bbn$ be a homeomorphic copy of $\bbh$ and let $X$ be the space obtained by attaching the family $\{H_n\}$ to the unit interval $\ui$ by identifying the basepoint of $H_n$ with $1/n\in\ui$. We give $X$ the weak topology with respect to the subspaces $\{\ui,H_1,H_2,H_3,\dots\}$. This space is locally path connected and compactly generated but is not first countable at $0$. In fact, since $X$ is the quotient of the topological sum $\ui\sqcup \coprod_{n}H_n$ of locally path-connected metric spaces, $X$ is a \textit{$\Delta$-generated space} in the sense that $U\subseteq X$ is open if and only if $\alpha^{-1}(U)$ is open in $\ui$ for all paths $\alpha:\ui\to X$; See \cite{CSWDiffeology,FRdirectedhomotopy}.

Although it is clear that $\wild(X)=\{0,\dots,1/3,1/2,1\}$, every compact subset of $X$ must lie in a subspace $X_m=\ui\cup H_1\cup H_2\cup\,\cdots\,\cup H_m$, $m\in\bbn$, which is locally contractible at $0$. Hence, for every map $f:(\bbh,b_0)\to (X,0)$, there exists $n\in\bbn$ such that $f(\bbh_{\geq n})\subseteq [0,1/m)$. Therefore, $\pi_1(X,0)$ is finitary at $0$ and, moreover, we have $\awild(X)=\{\dots,1/3,1/2,1\}$. This example shows that even after restricting to some reasonable categories of spaces, $\awild(X)$ need not be sequentially closed.
\end{example}

\begin{figure}[H]
\centering \includegraphics[height=1.3in]{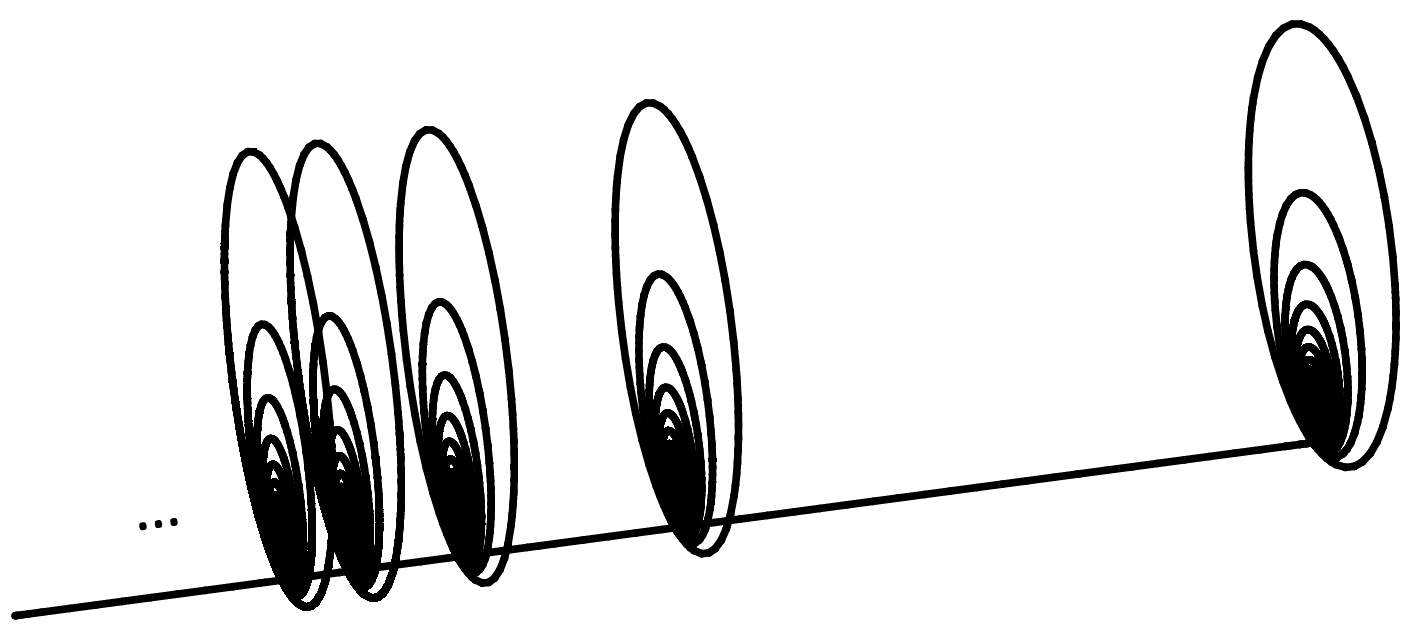}
\caption{The space $X$ in Example \ref{example2}}
\end{figure}

Within the class of spaces $X$ with a certain rigidity property (called the discrete monodromy property in \cite[Section 9]{BFThawaiianpants}), the homeomorphism type of $\wild(X)$ is a homotopy invariant of $X$. However, this is clearly not true in general since $\awild(X)$ and $\awild(X\times\ui)=\awild(X)\times \ui$ need not be homeomorphic. In the next theorem, we show that the homotopy types of the topological 1-wild set and the algebraic $1$-wild set are both invariants of homotopy type.

\begin{theorem}
The homotopy types of $\wild(X)$ and $\awild(X)$ are homotopy invariants of $X$.
\end{theorem}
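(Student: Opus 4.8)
The plan is to produce, for each homotopy equivalence $h : X \to Y$, a homotopy equivalence between $\wild(X)$ and $\wild(Y)$ (and likewise for $\awild$), built out of the restriction of $h$ and a homotopy inverse $k : Y \to X$. The first point to establish is that both sets are ``homotopically detectable'' invariants: a homotopy equivalence $h : X \to Y$ carries $\wild(X)$ into $\wild(Y)$ and $\awild(X)$ into $\awild(Y)$ in a way compatible with homotopies. For $\awild$ this is clean via the characterizing map criterion proved above: if $x \in \awild(X)$ there is a map $f : (\bbh, b_0) \to (X,x)$ with each $f \circ \ell_n$ non-trivial; then $h \circ f : (\bbh, b_0) \to (Y, h(x))$ has each $h \circ f \circ \ell_n$ non-trivial, because a homotopy equivalence induces an injection on all $\pi_1$'s (indeed an isomorphism $\pi_1(X,x) \to \pi_1(Y,h(x))$ after the usual basepoint-change), so no $f\circ \ell_n$ can become null. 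Hence $h(\awild(X)) \subseteq \awild(Y)$. The same argument with $k$ gives $k(\awild(Y)) \subseteq \awild(X)$. For $\wild$ one argues dually: $x \notin \wild(X)$ means $x$ has a neighborhood $U$ with $\pi_1(U,x) \to \pi_1(X,x)$ trivial; here one must take a little care, since homotopy equivalences need not be local, so I would instead characterize $\wild$ via the $\awild$-style ``small loops'' net and observe that $h$ sends a convergent net of loops at $x$ to a convergent net of loops at $h(x)$ whose images stay non-trivial — wait, convergence is not preserved by arbitrary maps. So for $\wild$ the cleaner route is: it suffices to treat $\awild$ and then deduce $\wild$ by the same formal mechanism, since the only structural facts I will use about $\awild$ are (a) $h$ maps it into $\awild$ of the target, and (b) homotopic maps $X \to Y$ agree (up to the ambient homotopy) on $\awild(X)$; property (b) for $\wild$ needs the observation that if $F : X \times \ui \to Y$ is a homotopy and $x \in \wild(X)$, then $t \mapsto F(x,t)$ is a path in $\wild(Y)$, which follows because each $F(-,t)$ is a homotopy equivalence and by (a) applied to it.

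With detectability in hand, the core construction is standard ``retract of a mapping'' bookkeeping. Set $g = h|_{\awild(X)} : \awild(X) \to \awild(Y)$ and $g' = k|_{\awild(Y)} : \awild(Y) \to \awild(X)$, both well-defined by the containments above. Since $k \circ h \simeq \mathrm{id}_X$ via a homotopy $F : X \times \ui \to X$, property (b) above tells us $F$ restricts to a homotopy $\awild(X) \times \ui \to \awild(X)$ from $g' \circ g$ to $\mathrm{id}_{\awild(X)}$ — the key point being that $F(\awild(X) \times \ui) \subseteq \awild(X)$, which is exactly what detectability under the homotopy equivalences $F(-,t)$ gives. Symmetrically $g \circ g' \simeq \mathrm{id}_{\awild(Y)}$, so $g$ is a homotopy equivalence $\awild(X) \simeq \awild(Y)$. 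The identical argument with $\wild$ in place of $\awild$ gives the first statement.

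The step I expect to be the main obstacle is precisely the ``restriction of the homotopy stays inside the wild set'' claim, i.e. showing $F(\awild(X) \times \ui) \subseteq \awild(X)$ rather than merely $F(\awild(X) \times \{0,1\}) \subseteq \awild(X)$. For a fixed $t$, $F(-,t)$ is homotopic to $\mathrm{id}_X$ hence is a homotopy equivalence $X \to X$, so by detectability it maps $\awild(X)$ into $\awild(X)$; running $t$ over $\ui$ gives the full containment. This reduces everything to the single lemma ``a self-homotopy-equivalence of $X$ preserves $\awild(X)$ and $\wild(X)$ setwise,'' which in turn is immediate from the map-into-$\bbh$ criterion together with the fact that a homotopy equivalence induces $\pi_1$-isomorphisms at every basepoint (so non-triviality of each $f \circ \ell_n$ is preserved). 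I would also record the minor caveat that basepoint changes along the homotopy $F$ conjugate the induced $\pi_1$-maps, which does not affect triviality of individual loop classes, so the criterion applies verbatim. Everything else — composing the restricted maps, citing the earlier characterizations, and assembling the two homotopies — is routine.
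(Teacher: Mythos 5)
Your treatment of $\awild$ is correct and is essentially the paper's argument: the two ingredients are (a) a map inducing injections on fundamental groups carries $\awild(X)$ into $\awild$ of the target (via the $\bbh$-map criterion), and (b) a homotopy $F$ starting at a homotopy equivalence satisfies $F(\awild(X)\times\ui)\subseteq\awild$ of the target, after which restricting the homotopies between $k\circ h$, $h\circ k$ and the identities gives the equivalence $\awild(X)\simeq\awild(Y)$. Your route to (b) --- each slice $F(-,t)$ is homotopic to the identity, hence a homotopy equivalence, hence $\pi_1$-injective, so (a) applies --- is a harmless variant of the paper's, which instead uses the free homotopy from $k$ to $F_t\circ k$ directly to see that the loops $F_t\circ k\circ\ell_n$ stay non-trivial.

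The gap is in the $\wild$ half: you never establish the containment $h(\wild(X))\subseteq\wild(Y)$, and your ``same formal mechanism'' is circular there, since you derive property (b) for $\wild$ from property (a) for $\wild$ applied to the slices $F(-,t)$, while (a) for $\wild$ is exactly the claim you abandoned mid-argument. Moreover, the reason you abandoned it is mistaken: post-composition with a continuous map $h$ is continuous for the compact-open topology on loop spaces, so a net of non-trivial loops at $x$ converging to the constant loop at $x$ is sent by $h$ to a net of loops at $h(x)$ converging to the constant loop at $h(x)$ (equivalently, and even more simply, if every neighborhood of $x$ contains a loop non-trivial in $X$, then for any neighborhood $V$ of $h(x)$ the neighborhood $h^{-1}(V)$ of $x$ contains such a loop, whose $h$-image lies in $V$), and these images remain non-trivial because $h_{\#}$ is injective at every basepoint. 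This net argument is precisely how the paper disposes of the $\wild$ case, stating that the proof for $\wild(X)$ is the $\awild$ proof with the null sequences encoded by maps $\bbh\to X$ replaced by nets of non-trivial loops. With that repair, your proof of the $\wild$ statement goes through verbatim alongside the $\awild$ one; without it, half of the theorem is unproved.
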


\begin{proof}
We prove the result for $\awild(X)$. The proof for $\wild(X)$ simply requires replacing the sequences of non-trivial loops described using maps $\bbh\to X$ with nets of non-trivial loops. We begin by making two general observations. First, notice that if $f:(X,x)\to (Y,f(x))$ induces an injection $f_{\#}$ on fundamental groups for all $x\in X$, then $f(\awild(X))\subseteq \awild(Y)$. Secondly, suppose $H:X\times \ui\to X$ is a homotopy such that $H(x,0)=x$. We claim that $H(\awild(X)\times \ui)\subseteq \awild(X)$. If $(a,t)\in\awild(X)\times\ui$, then there is a map $k:(\bbh,b_0)\to (X,a)$ such that $k\circ\ell_n$ is non-trivial for each $n\in\bbn$. Let $H_t(x)=H(x,t)$. Since $H_0\circ k=k$ is freely homotopic to $H_t\circ k:(\bbh,b_0)\to (X,H(a,t))$, the loop $H_t\circ k\circ\ell_n$ is non-trivial for each $n\in\bbn$. Therefore, $H(a,t)\in \awild(X)$.

Suppose $f:X\to Y$ and $g:Y\to X$ are homotopy inverses. By the first observation, we have $g(f(\awild(X)))\subseteq g(\awild(Y))\subseteq \awild(X)$ and similarly, $f(g(\awild(Y)))\subseteq \awild(Y)$. We check that $f|_{\awild(X)}$ and $g|_{\awild(Y)}$ are homotopy inverses. Let $H:X\times \ui\to X$ be a homotopy from $H(x,0)=x$ to $H(x,1)=g(f(x))$. By the second observation, $H$ restricts to a homotopy $G=H|_{\awild(X)\times\ui}:\awild(X)\times\ui\to\awild(X)$ where $G(x,0)=x$ and $G(x,1)=g|_{\awild(Y)}(f|_{\awild(X)}(x))$. Thus $g|_{\awild(Y)}\circ f|_{\awild(X)}\simeq id_{\awild(X)}$. The symmetric argument shows that $f|_{\awild(X)}\circ g|_{\awild(Y)}\simeq id_{\awild(Y)}$. 
\end{proof}

\begin{example}
Recall that the space $X$ constructed in Example \ref{example2} has a discrete algebraic $1$-wild set $\awild(X)=\{\dots,1/3,1/2,1\}$ and consider the subspace $X'=\{(t,0,0)\in\bbr^3\mid t\in\ui\}\cup\bigcup_{n\in\bbn}\{1/n\}\times \bbh$ of $\bbr^3$. There is a continuous bijection $X\to X'$; however, since $X'$ is a metric space $\awild(X')=\wild(X')=\{0,\dots,1/3,1/2,1\}$. Since $\awild(X)$ and $\awild(X')$ are totally path-disconnected but not homeomorphic, they are not homotopy equivalent. Therefore, $X$ and $X'$ are not homotopy equivalent.
\end{example}

More generally, if $\wild(X)\nsimeq\awild(X)$ for a given space $X$, then $X$ is not homotopy equivalent to any first countable space.

\section{Homotopy Cut-Sets}\label{homotopycutsetsection}

\begin{definition}
If $A$ is a non-degenerate compact subset of $\bbr$, let $\mci(A)$ denote the set of components of $[\min(A),\max(A)]\backslash A$ equipped with the linear ordering inherited from $\bbr$. 
\end{definition}
Note that $\mci(A)$ is always countable and if $A$ is nowhere dense, then $A$ may be identified with the set of cuts of $\mci(A)$. We refer to \cite{RosensteinLO} for basic linear order theory.

\begin{definition}
Given two paths $\alpha,\beta:[s,t] \to X$, a set $A\subseteq [s,t]$ is called a \textit{homotopy cut-set for} $\alpha$ \textit{and} $\beta$ if 
\begin{enumerate}
\item $A$ is closed, nowhere dense, and contains $\{s,t\}$,
\item $\alpha|_{A}=\beta|_{A}$,
\item for every component $(a,b)\in\mci(A)$, $\alpha|_{[a,b]}$ and $ \beta|_{[a,b]}$ are path-homotopic.
\end{enumerate}
\end{definition}

\begin{remark}
As noted in \cite[Remark 7.2]{BFTestMap}, there is no harm in insisting that homotopy cut-sets are nowhere dense. If we allowed a homotopy cut-set $A$ for paths $\alpha$ and $\beta$ to have a non-empty interior, then the assumption $\alpha|_{A}=\beta|_{A}$ ensures that for any non-empty component $(a,b)$ of $int(A)$, we have $\alpha|_{[a,b]}=\beta|_{[a,b]}$. Hence, we could replace $A$ with its boundary, which is nowhere dense.
\end{remark}

If there is a finite homotopy cut-set $A=\{a_0,a_1,a_2,\dots,a_n\}$ for paths $\alpha,\beta:\ui\to X$, then $\alpha\equiv \prod_{i=1}^{n}\alpha_i$ and $\beta\equiv \prod_{i=1}^{n}\beta_i$ where $\alpha_i\simeq\beta_i$ for all $i$. Finite concatenation of path-homotopies gives $\alpha\simeq\beta$. More generally, if $A$ is any homotopy cut-set for $\alpha$ and $\beta$ and $F$ is a finite subset of isolated points in $A$, then $(A\backslash F)\cup \{0,1\}$ is a homotopy cut-set for $\alpha$ and $\beta$. 

\begin{remark}
If $A$ is a homotopy cut-set for $\alpha,\beta:[s,t]\to X$ and if we have $c,d\in A$ with $c<d$, then $A\cap [c,d]$ is a homotopy cut-set for $\alpha|_{[c,d]}$ and $\beta|_{[c,d]}$.
\end{remark}

\begin{definition}\label{fourdefs}
A space $X$ has
\begin{enumerate}
\item \textit{well-defined scattered $\pi_1$-products} if any pair of loops (based at the same point) in $X$ that admit a scattered homotopy cut-set are path-homotopic.
\item \textit{well-defined scattered $\Pi_1$-products} if any pair of paths $\ui\to X$ that admit a scattered homotopy cut-set are path-homotopic.
\item \textit{well-defined transfinite $\pi_1$-products} if any pair of loops (based at the same point) in $X$ that admit a homotopy cut-set are path-homotopic.
\item \textit{well-defined transfinite $\Pi_1$-products} if any pair of paths $\ui\to X$ that admit a homotopy cut-set are path-homotopic.
\end{enumerate}
\end{definition}

Clearly, we have (4) $\Rightarrow$ (3) $\Rightarrow$ (1) and (4) $\Rightarrow$ (2) $\Rightarrow$ (1). Less trivially, the converse (1) $\Rightarrow$ (2) holds in general; See Theorem \ref{scatteredproducthm} below. The converse of (4) $\Rightarrow$ (3) does not hold for all locally path-connected metrizable spaces \cite[Theorem 1.1]{BrazDense}. It is an open question if the converse of (3) $\Rightarrow$ (1) holds. 

\begin{remark}\label{homotopyinvarianceremark}
All four of the properties defined in Definition \ref{fourdefs} are homotopical properties, i.e. invariant under homotopy type. This observation can be proved directly; however, it also follows from the more general test-map framework developed in \cite{BFTestMap}; See Remark 2.11 in \cite{BFTestMap}.
\end{remark}

Recall that a space $X$ is \textit{homotopically Hausdorff} if there does not exist a non-trivial based loop $(\ui,\{0,1\})\to (X,x)$ that is path-homotopic to some loop in every neighborhood of $x$. We refer to \cite{BFTestMap,CMRZZ08,FRVZ11} for characterizations and examples related to this property.

\begin{theorem}\cite{BrazScattered}\label{scatteredproducthm}
For any path-connected space $X$, the following are equivalent:
\begin{enumerate}
\item any pair of loops in $X$ admitting a homotopy cut-set $A$ such that $\mci(A)$ has order type $\omega$ are path-homotopic,
\item any pair of paths in $X$ admitting a homotopy cut-set $A$ such that $\mci(A)$ has order type $\omega$ are path-homotopic,
\item $X$ has well-defined scattered $\pi_1$-products,
\item $X$ has well-defined scattered $\Pi_1$-products.
\end{enumerate}
Moreover, these four conditions are implied by the homotopically Hausdorff property. If $X$ is first countable, then these four conditions are equivalent to the homotopically Hausdorff property.
\end{theorem}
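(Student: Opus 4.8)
The plan is to close the circle of implications by proving $(1)\Rightarrow(2)\Rightarrow(4)$ (the implications $(4)\Rightarrow(3)\Rightarrow(1)$ and $(4)\Rightarrow(2)$ being restrictions), and then to obtain the two homotopically Hausdorff assertions by reusing the construction from $(1)\Rightarrow(2)$. For the step $(1)\Rightarrow(2)$ — passing from loops to paths with different endpoints — suppose $\alpha,\beta:\ui\to X$ admit a homotopy cut-set $A$ with $\mci(A)$ of order type $\omega$. Write $A=\{0=t_0<t_1<t_2<\cdots\}\cup\{1\}$ with $t_n\nearrow 1$, set $\alpha_n=\alpha|_{[t_{n-1},t_n]}\simeq\beta_n=\beta|_{[t_{n-1},t_n]}$, $x_n=\alpha(t_n)=\beta(t_n)$, $x_\infty=\alpha(1)=\beta(1)$, and for $n\geq0$ define the loop $\nu_n:=\big(\beta|_{[t_n,1]}\big)^{-}\cdot\alpha|_{[t_n,1]}$ based at $x_\infty$. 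Then $\{\nu_n\}_{n\geq0}$ is a null sequence at $x_\infty$ (its image is $\alpha([t_n,1])\cup\beta([t_n,1])$, which shrinks into any neighborhood of $x_\infty$), and $\nu_n\simeq\nu_{n+1}$ for all $n$, since $\nu_n\equiv\big(\beta|_{[t_{n+1},1]}\big)^{-}\cdot(\beta_{n+1}^{-}\cdot\alpha_{n+1})\cdot\alpha|_{[t_{n+1},1]}$ and $\beta_{n+1}^{-}\cdot\alpha_{n+1}\simeq\beta_{n+1}^{-}\cdot\beta_{n+1}$ is trivial. Now form the loops $D:=\prod_{n\geq0}\nu_n$ and $D':=\prod_{n\geq1}\nu_n$ at $x_\infty$: the evident subdivision of order type $\omega$ is a homotopy cut-set for $D$ and $D'$, since the $n$-th factor $\nu_n$ of $D$ is path-homotopic to the corresponding factor $\nu_{n+1}$ of $D'$ and all cut points map to $x_\infty$. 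Hence $(1)$ gives $D\simeq D'$. But $D\equiv\nu_0\cdot D'$, so $\nu_0\cdot D'\simeq D'$ and cancellation in $\pi_1(X,x_\infty)$ forces $\nu_0=\beta^{-}\cdot\alpha$ to be null-homotopic, i.e.\ $\alpha\simeq\beta$.

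For $(2)\Rightarrow(4)$ I would argue by transfinite induction using Hausdorff's structure theory of countable scattered linear orders: each such order lies in the smallest class $\mathcal{H}$ that contains all orders with at most one element and is closed under finite sums and under $\omega$- and $\omega^{*}$-indexed sums; let $\mathcal{H}_\lambda$ denote the $\lambda$-th level of the resulting hierarchy. Given paths $\alpha,\beta$ with a scattered homotopy cut-set $A$, induct on the least $\lambda$ with $\mci(A)\in\mathcal{H}_\lambda$. The base case ($A$ finite) is a finite concatenation of path-homotopies. For the inductive step, write $\mci(A)$ as a finite sum of $\omega$- or $\omega^{*}$-indexed sums of lower-rank orders; the finitely many top-level cuts are realized by points of $A$ (a cut that were not realized would, since $A$ is nowhere dense, produce a component of $[\min A,\max A]\setminus A$ lying strictly between the two parts of the cut, a contradiction), so after a finite concatenation we may assume $\mci(A)=\sum_{i\in\omega}L_i$ with each $L_i$ of strictly lower rank — the $\omega^{*}$ case reduces to this by reversing $\ui$, under which $\mathcal{H}$ is stable. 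The successive boundary cuts are points $0=u_0<u_1<u_2<\cdots$ of $A$ with $u_i\nearrow\max A$, and $A\cap[u_{i-1},u_i]$ is a homotopy cut-set for $\alpha|_{[u_{i-1},u_i]},\beta|_{[u_{i-1},u_i]}$ with index order $L_{i-1}$; the inductive hypothesis yields $\alpha|_{[u_{i-1},u_i]}\simeq\beta|_{[u_{i-1},u_i]}$, so $A':=\{u_i:i\geq0\}\cup\{\max A\}$ is a homotopy cut-set for $\alpha$ and $\beta$ with $\mci(A')$ of order type $\omega$, and $(2)$ completes the step. Together with the restrictions mentioned above this establishes the equivalence of $(1)$--$(4)$.

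For the homotopically Hausdorff assertions: if $X$ is homotopically Hausdorff and $\alpha,\beta$ admit a homotopy cut-set of order type $\omega$ with $\alpha\not\simeq\beta$, then in the notation of the first paragraph $\nu_0=\beta^{-}\cdot\alpha$ is a non-trivial loop at $x_\infty$ that is path-homotopic to $\nu_n$ for every $n$, with $\nu_n$ contained in every neighborhood of $x_\infty$ — contradicting the homotopically Hausdorff property. Hence such a space satisfies $(2)$, and therefore all of $(1)$--$(4)$. Conversely, assume $X$ is first countable and not homotopically Hausdorff at some $y$; fix a nested neighborhood base $U_1\supseteq U_2\supseteq\cdots$ at $y$ and a non-trivial loop $\lambda$ at $y$ together with loops $\lambda_n\subseteq U_n$ with $\lambda_n\simeq\lambda$. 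Then $\{\lambda_n\}$ is a null sequence at $y$ with $\lambda_n\simeq\lambda_{n+1}$, so exactly as before $D=\prod_{n\geq1}\lambda_n$ and $D'=\prod_{n\geq2}\lambda_n$ are loops at $y$ sharing a homotopy cut-set of order type $\omega$ while $D\equiv\lambda_1\cdot D'$; if $(1)$ held we would get $\lambda_1\simeq\lambda$ trivial, a contradiction. Thus $(1)$ fails, which gives the stated first-countable equivalence.

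The main obstacle is the implication $(1)\Rightarrow(2)$, i.e.\ upgrading a hypothesis about loops to one about paths whose endpoints genuinely differ; everything else is either a restriction or, for $(2)\Rightarrow(4)$, bookkeeping with Hausdorff's theorem (the one delicate point there being the realization of the separating cuts by points of $A$, which is exactly where nowhere density is used). The device that resolves the obstacle is to package the discrepancy between $\alpha$ and $\beta$ into the loop $\nu_0=\beta^{-}\cdot\alpha$ at the accumulation point $x_\infty$, attach it to the shifted infinite product $D'=\prod_{n\geq1}\nu_n$, and observe that $(1)$ supplies the homotopy $D\simeq D'$ for $D=\prod_{n\geq0}\nu_n=\nu_0\cdot D'$, after which $\nu_0$ cancels; the same $D$-versus-$D'$ maneuver then powers both homotopically Hausdorff statements.
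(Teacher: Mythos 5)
This theorem is not proved in the paper itself --- it is quoted from \cite{BrazScattered} --- so there is no internal proof to compare against; judged on its own, your argument is correct and is the standard route to this result. The two key devices you use --- the shift/telescope trick (applying (1) to the loops $D=\prod_{n\geq 0}\nu_n$ and $D'=\prod_{n\geq 1}\nu_n$ with $D\equiv\nu_0\cdot D'$, then cancelling in $\pi_1(X,x_\infty)$, which also powers both homotopically Hausdorff assertions) and the transfinite induction on Hausdorff rank of the countable scattered order $\mci(A)$, with the cut points between blocks realized inside $A$ by nowhere density --- are exactly the expected ones, and your handling of the only delicate points (realization of cuts, reduction of the $\omega^{*}$ case by reversal, degenerate or finitely many blocks) is sound, if compressed.
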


\section{Reductions of Homotopy Cut-sets}\label{technicalsection}

In this section, we work carefully to determine when the reduction $(\alpha,B)$ of a factorization $(\alpha,A)$ passes to a well-defined reduction on homotopy classes.

\subsection{Reduction at non-wild cut points}

In algebraic topology, it is standard to use a Lebesgue number to subdivide a path so that every segment of the subdivision is mapped into some neighborhood of a given open cover. The following lemma is a variant of this technique more suited to our purposes. If $A\subseteq \ui$, we say two points $a$ and $b$ are \textit{consecutive in} $A$ if $a,b\in A$, $a<b$, and $A\cap (a,b)=\emptyset$.

\begin{lemma}\label{subdivisionlemma}
Suppose $A\subseteq \ui$ is a cut-set, $\alpha,\beta:\ui\to X$ are paths such that $\alpha|_{A}=\beta|_{A}$, and $\scru$ is a cover of $\alpha(A)=\beta(A)$ by open sets in $X$. Then there exists a subdivision $0=t_0<t_1<t_2<\cdots <t_m=1$ of points in $A$ such that, for each $j\in \{1,2,\dots ,m\}$, either $t_{j-1}$ and $t_j$ are consecutive points in $A$ or $\alpha([t_{j-1},t_j])\cup \beta([t_{j-1},t_j])\subseteq U$ for some $U\in\scru$.
\end{lemma}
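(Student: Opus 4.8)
We want a subdivision $0 = t_0 < t_1 < \cdots < t_m = 1$ with points in $A$ so that each $[t_{j-1},t_j]$ is either a "gap" (i.e., $t_{j-1},t_j$ consecutive in $A$) or is mapped by both $\alpha$ and $\beta$ into a common member of $\scru$. The natural approach is a compactness/Lebesgue-number argument, but one must be careful because $\scru$ covers only $\alpha(A)$, not all of $X$, so we cannot directly subdivide $\ui$ by a Lebesgue number.

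\textbf{Step 1: Replace $\scru$ by a cover of a neighborhood of $A$ in $\ui$.} For each $a \in A$, pick $U_a \in \scru$ with $\alpha(a) = \beta(a) \in U_a$. By continuity of $\alpha$ and $\beta$, choose an open interval $V_a \ni a$ in $\ui$ with $\alpha(V_a) \cup \beta(V_a) \subseteq U_a$. Then $\scrv = \{V_a : a \in A\}$ is an open cover of the compact set $A$; let $W = \bigcup_{a \in A} V_a$, an open neighborhood of $A$ in $\ui$. The complement $\ui \setminus W$ is compact and disjoint from $A$, hence is contained in a finite union of gaps of $A$; more precisely, each component of $\ui \setminus W$ is a closed subinterval contained in some open gap $(a,b)$ with $a,b$ consecutive in $A$ (its endpoints need not lie in $A$, but it is contained in a single component of $\ui \setminus A$).

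\textbf{Step 2: Build the subdivision.} The complement $\ui \setminus W$ has finitely many components $K_1, \dots, K_r$ (listed in order); each $K_i$ lies in a gap $(a_i, b_i)$ with $a_i, b_i$ consecutive in $A$. The "in-between" pieces $\ui \setminus \bigcup_i \mathrm{int}(K_i)$ — equivalently the closures of the components of $\ui \cap W$ together with $\{0,1\}$ — are compact subsets of $\bar W$; apply a Lebesgue number argument to the cover $\{V_a\}$ restricted to each such piece to subdivide it into finitely many subintervals, each contained in a single $V_a$. Also insert the points $a_i, b_i$ as subdivision points (this is the key move: a piece of $\ui$ meeting a gap $(a_i,b_i)$ gets cut at $a_i$ and $b_i$, so the part inside $[a_i,b_i]$ is exactly a gap interval, while the parts outside sit in $W$). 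Then refine: replace every subdivision point not already in $A$ by a nearby point of $A$ — possible since $A$ is nowhere dense so points of $A$ are dense in... no, wait: $A$ need not be dense, but each subdivision point $t$ that lies in $W$ sits inside some $V_a$, and we can slide it to $a \in A$; and for subdivision points we only need them to land in $A$ while keeping each resulting closed subinterval inside some $V_a$ (for the $W$-pieces) or equal to a gap $[a_i,b_i]$ (already having endpoints in $A$). A short argument shows we can choose the $A$-points so consecutive ones still lie together in some $V_a$, hence both $\alpha$ and $\beta$ map the subinterval into the corresponding $U_a$.

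\textbf{The main obstacle.} The delicate point is Step 2: reconciling the Lebesgue-number subdivision of the "good" region $\bar W$ with the requirement that \emph{all} subdivision points lie in $A$, while the leftover intervals that fail to lie in any $U \in \scru$ are \emph{exactly} gaps of $A$ (not merely subsets of gaps). One must order the finitely many components of $\ui \setminus W$, cut at the consecutive pair $a_i < b_i$ of $A$ surrounding each such component, handle the finitely many interstitial compact pieces of $\bar W$ by an ordinary Lebesgue number argument relative to $\{V_a : a \in A\}$, and then perturb each interior subdivision point from $W$ into $A$ using the openness of the $V_a$'s — checking throughout that adjacent subdivision points in $A$ remain in a common $V_a$ so that the corresponding $U_a$ works for both $\alpha$ and $\beta$. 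Once the bookkeeping is set up, each verification is routine continuity.
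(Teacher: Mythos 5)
Your overall architecture is viable and genuinely different from the paper's: the paper takes a Lebesgue number for the cover $\{\alpha|_{A}^{-1}(U) : U\in\scru\}$ of the compact metric space $A$ itself, works on the uniform grid $\left[\frac{k-1}{n},\frac{k}{n}\right]$, and inside each cell observes that only finitely many gaps of $A$ can have $\alpha$- or $\beta$-image escaping the chosen $U$, cutting exactly at the endpoints of those bad gaps. Your route instead pulls the cover back to a neighborhood $W=\bigcup_a V_a$ of $A$ in $\ui$, excises the finitely many gaps of $A$ containing components of $\ui\setminus W$ (that part is fine: a compact set disjoint from $A$ meets only finitely many components of $\ui\setminus A$), and is left with finitely many closed intervals $[c,d]\subseteq W$ with $c,d\in A$. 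The genuine gap is the last step, where you Lebesgue-subdivide such a piece and then ``slide'' each subdivision point not in $A$ to a nearby point of $A$: this is precisely the part you wave at with ``a short argument shows,'' and as described it fails.

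Here is why. A Lebesgue subdivision point $r\in[c,d]$ with $r\notin A$ lies in some gap $(s,t)$ of $A$, and the nearest points of $A$ available are $s$ and $t$. Moving $r$ to either one strictly enlarges one of the two adjacent subintervals, and the enlarged interval is then only known to lie in a union of two of the $V$'s, hence its image only in a union of two members of $\scru$ --- not in a single $U$. Moreover, your interim goal for the $W$-pieces (``each resulting closed subinterval inside some $V_a$'') is unattainable in general: a gap $(s,t)\subseteq[c,d]$ is covered by $V$'s containing $s$ together with $V$'s containing $t$, but need not be contained in any single $V_a$, nor need $\alpha([s,t])\cup\beta([s,t])$ lie in a single $U$; such a gap can only be handled by cutting at $s$ and $t$ and invoking the ``consecutive in $A$'' clause, even inside the $W$-pieces. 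The repair is the same ``key move'' you already use for the components of $\ui\setminus W$: replace every Lebesgue point $r\notin A$ by \emph{both} endpoints of the gap of $A$ containing it. Then all subdivision points lie in $A$, every inserted segment is a consecutive pair in $A$, and every other subinterval between consecutive new points is contained in one of the original Lebesgue subintervals (it only shrinks), hence in a single $V_a$ and so is mapped by both $\alpha$ and $\beta$ into $U_a$. With that correction your argument closes; without it, the sliding step as written does not.
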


\begin{proof}
Since $A$ is a compact metric space, we may find $n\in\bbn$ such that if $a,b\in A$ and $|a-b|\leq 1/n$, then $\alpha(A\cap [a,b])$ lies in some $U\in\scru$. We build the desired subdivision $S=\{t_0,t_1,\dots,t_m\}$ by defining $S_k=S\cap \left[\frac{k-1}{n},\frac{k}{n}\right]$ for each $k\in\{1,2,\dots ,n\}$. If $k$ is such that $A\cap\left[\frac{k-1}{n},\frac{k}{n}\right]$ is finite (possibly empty), we set $S_k=A\cap\left[\frac{k-1}{n},\frac{k}{n}\right]$. On the other hand, fix a $k$ such that $A\cap\left[\frac{k-1}{n},\frac{k}{n}\right]$ is infinite. Let $p$ and $q$ be the minimal and maximal elements of $A\cap\left[\frac{k-1}{n},\frac{k}{n}\right]$ respectively and fix $U\in\scru$ such that $\alpha\left(A\cap\left[\frac{k-1}{n},\frac{k}{n}\right]\right)\subseteq U$. Consider the infinite set of components $\{I_1,I_2,I_3,\dots\}$ of $[p,q]\backslash (A\cap[p,q])$. Due to the compactness of $[p,q]$ and the continuity of $\alpha|_{[p,q]}$ and $\beta|_{[p,q]}$, there can only be finitely many $v\in\bbn$ such that $\alpha(I_v)\cap X\backslash U\neq \emptyset$ or $\beta(I_v)\cap X\backslash U\neq \emptyset$. Therefore, by taking the union of $\{p,q\}$ and the boundaries of these finitely many open intervals $I_v$, we may find a subdivision $p=s_0<s_1<s_2<\dots<s_r=q$ using points in $A$ such that, for each $i\in \{1,2,\dots, r\}$, either $s_{i-1}$ and $s_{i}$ are consecutive in $A\cap [p,q]$ or $\alpha([s_{i-1},s_i])\cup \beta([s_{i-1},s_i])\subseteq U$. Set $S_k=\{s_0,s_1,s_2,\dots,s_r\}$. Since $\{0,1\}$ has been automatically included in $S$, the construction of $S$ makes it clear that this subdivision sufficiently satisfies the desired conditions.
\end{proof}

The author finds it interesting that the following proof is essentially identical in structure to the standard proof of the compactness of $\ui$.

\begin{lemma}\label{nonwildlemma1}
If $A$ is a homotopy cut-set for paths $\alpha,\beta:\ui\to X$, and $\alpha(A)=\beta(A)\subseteq X\backslash\awild(X)$, then $\alpha\simeq\beta$.
\end{lemma}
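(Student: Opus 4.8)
The plan is to mimic the standard compactness argument for $\ui$, as the author hints. Call a point $c\in A$ \emph{good} if $\alpha|_{[0,c]}\simeq\beta|_{[0,c]}$, and let $T=\{c\in A: c\text{ is good}\}$; I want to show $1\in T$. Certainly $0\in T$. The key local observation is that if $c\in A\setminus\awild(X)$ and $U$ is a neighborhood of $\alpha(c)=\beta(c)$ that is sufficiently small (so that, in particular, $\pi_1(X,\alpha(c))$ is ``finitary at $\alpha(c)$'' — but $\alpha(c)\notin\awild(X)$ already guarantees this), then any two paths that agree at $c$, stay inside $U$ on a neighborhood of $c$ in $A$, and are path-homotopic on each component of $A$ in that neighborhood, can be spliced together: the finiteness built into Lemma~\ref{subdivisionlemma} reduces the infinitely many factors to finitely many, and finite concatenation of homotopies finishes the local step. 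More precisely, for each $c\in A$ choose an open $U_c\ni\alpha(c)$; since $\alpha(A)\subseteq X\setminus\awild(X)$, I can arrange (using that $\pi_1$ is finitary at each such point, via Proposition~\ref{finitarychar}) that null-sequences of loops based at $\alpha(c)$ landing in $U_c$ are eventually trivial in $X$.

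The main induction then runs as follows. First I would apply Lemma~\ref{subdivisionlemma} to the open cover $\scru=\{U_c: c\in A\}$ of $\alpha(A)$ to obtain a finite subdivision $0=t_0<t_1<\cdots<t_m=1$ of points of $A$ such that each consecutive pair $[t_{j-1},t_j]$ is either a single component gap of $A$ (in which case $\alpha|_{[t_{j-1},t_j]}\simeq\beta|_{[t_{j-1},t_j]}$ because $A$ is a homotopy cut-set) or satisfies $\alpha([t_{j-1},t_j])\cup\beta([t_{j-1},t_j])\subseteq U$ for some $U=U_c\in\scru$. In the latter case I must show $\alpha|_{[t_{j-1},t_j]}\simeq\beta|_{[t_{j-1},t_j]}$ using the homotopy cut-set $A\cap[t_{j-1},t_j]$ together with the containment in $U_c$ and finitariness at $\alpha(c)$: this is exactly the assertion that $U_c$, restricted to loops and paths into it, ``reduces'' — a null sequence of non-contractible loops based at a point of $\alpha(A)$ inside $U_c$ cannot exist, so the argument of Theorem~\ref{scatteredproducthm} (order type $\omega$ reductions) applies piece by piece. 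Once each of the finitely many segments is handled, finite concatenation of the resulting path-homotopies gives $\alpha\simeq\beta$, i.e. $1\in T$.

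The subtle point — and where I expect the real work to lie — is the local well-definedness step: showing that within a suitably chosen neighborhood $U_c$ of a non-wild point, a pair of paths sharing a homotopy cut-set really is path-homotopic. Unlike the finite case, the homotopy cut-set $A\cap[t_{j-1},t_j]$ may be wildly complicated (e.g. a Cantor set), so I cannot just concatenate finitely many homotopies directly. The resolution is to iterate: the failure to reduce can only be "concentrated" near points of $\awild(X)\cap U_c=\emptyset$, so a second application of Lemma~\ref{subdivisionlemma} inside $U_c$, or an ordinal-indexed exhaustion of the cut-set by its isolated points, pushes any obstruction into nothing. Concretely, I would argue that the set of bad sub-configurations is itself closed and, by the local finitariness, has no accumulation behaviour, forcing it to be empty — this is precisely the "structurally identical to compactness of $\ui$" phenomenon the author points to, where goodness is an open-and-closed condition on $A$ that must therefore be all of $A$. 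I expect the bookkeeping of which homotopies to concatenate, and the verification that the "good" set $T$ is closed (the limit of good points is good, using continuity of $\alpha,\beta$ and a final local splice), to be the most delicate part of writing this out carefully.
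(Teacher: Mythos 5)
There is a genuine gap. Your main plan is to cover $\alpha(A)$ by neighborhoods $U_c$, apply Lemma \ref{subdivisionlemma}, and then dispose of each segment $[t_{j-1},t_j]$ whose image lies in some $U_c$; but the local step you defer is exactly the whole content of the lemma, and the tools you propose for it are not available. Lemma \ref{nonwildlemma1} carries no well-definedness hypotheses on $X$ whatsoever, so you cannot invoke ``the argument of Theorem \ref{scatteredproducthm}'': that theorem is an equivalence of conditions, not an unconditional statement, and in any case $A\cap[t_{j-1},t_j]$ may be a Cantor set, so scattered (order type $\omega$) reductions would not finish even if they were available. Your claim that the obstruction is ``concentrated near points of $\awild(X)\cap U_c=\emptyset$'' is also unjustified: the hypothesis only says $\alpha(A)\subseteq X\backslash\awild(X)$; it does not give neighborhoods of these image points missing $\awild(X)$ (membership in $X\backslash\awild(X)$ is not witnessed by a neighborhood, and $\awild(X)$ need not even be closed, cf.\ Example \ref{example2}), and $\awild(X)$ may well be dense in $X$. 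Finally, choosing $U_c$ so that null sequences of loops at $\alpha(c)$ inside $U_c$ are eventually trivial is a red herring: since $\alpha(c)\notin\awild(X)$, every null sequence of loops at $\alpha(c)$ already has all but finitely many terms trivial, with no neighborhood needed — but this fact only speaks about loops based at $\alpha(c)$, and your segment-level argument would need to handle infinitely many homotopies based at other points of $A\cap[t_{j-1},t_j]$.

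The missing idea, which is the engine of the paper's proof, is the following one-sided null-sequence argument: if $t\in A$ is approached from the right by points $t<\cdots<t_3<t_2<t_1$ of $A$, then the loops $\alpha|_{[t,t_n]}\cdot\beta|_{[t,t_n]}^{-}$ assemble (by continuity of $\alpha$ and $\beta$ at $t$) into a map $(\bbh,b_0)\to(X,\alpha(t))$; since $\alpha(t)\notin\awild(X)$, not all of these loops can be non-trivial, so $\alpha|_{[t,t_N]}\simeq\beta|_{[t,t_N]}$ for some $N$, and symmetrically from the left. With this in hand the paper runs the compactness-style argument you gesture at in your last paragraph, with no cover and no use of Lemma \ref{subdivisionlemma}: set $U=\{t\in A\mid \alpha|_{[0,t]}\simeq\beta|_{[0,t]}\}$ and $u=\sup U$; the gap case (consecutive points of $A$, where the homotopy cut-set hypothesis applies) plus the null-sequence step show $u>0$, then that $u\in U$ (approach from the left), then that $u<1$ is impossible (approach from the right), so $u=1$. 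Your closing remark that ``goodness is an open-and-closed condition'' has the right shape, but without the Hawaiian-earring step you have no mechanism for moving past points of $A$ that are accumulated on from the right by $A$, which is precisely where the difficulty lies when $A$ is not scattered.
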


\begin{proof}
We begin the proof by fixing $t\in A$, setting $x=\alpha(t)$, and considering the four ways in which $t$ may be surrounded by other points of $A$: 
\begin{enumerate}
\item If there exists $d\in A$ such that $t$ and $d$ are consecutive in $A$, then $\alpha|_{[t,d]}\simeq \beta|_{[t,d]}$ since $A$ is a homotopy cut-set.
\item If there exists $c\in A$ such that $c$ and $t$ are consecutive in $A$, then $\alpha|_{[c,t]}\simeq \beta|_{[c,t]}$ since $A$ is a homotopy cut-set.
\item If there exists $t<\cdots <t_3<t_2<t_1$ in $A$ converging to $t$, then we may define a map $f:(\bbh,b_0)\to (X,x)$ by $f\circ\ell_n=\alpha|_{[t,t_n]}\cdot\beta|_{[t,t_n]}^{-}$; continuity of $f$ follows from the continuity of $\alpha$ and $\beta$ at $t$. Since $x\notin \awild(X)$, there exists $N$ such that $\alpha|_{[t,t_N]}\cdot\beta|_{[t,t_N]}^{-}$ is null-homotopic, i.e. $\alpha|_{[t,t_N]}\simeq\beta|_{[t,t_N]}$.
\item If there exists $t_1<t_2<t_3<\cdots <t$ in $A$ converging to $t$, then we map apply the same idea as in (3) to see that $\alpha|_{[t_N,t]}\simeq\beta|_{[t_N,t]}$ for some $N$.
\end{enumerate}

Let $U=\{t\in A\mid \alpha|_{[0,t]}\simeq\beta|_{[0,t]}\}$ and consider $u=\sup(U)$. Either there is a point $t\in A$ such that $0$ and $t$ are consecutive in $A$ or there exists $\cdots t_3<t_2<t_1$ in $A$ converging to $0$. Applying Situations (1) and (3) above respectively, we see that there exists $v>0$ in $A$ such that $\alpha|_{[0,v]}\simeq \beta|_{[0,v]}$. Thus $u>0$.

Either $u\in U$ or there exists $t_1<t_2<t_3<\cdots<u$ in $U$ converging to $u$. In the latter case, we have $\alpha|_{[0,t_n]}\simeq \beta|_{[0,t_n]}$ for all $n\in\bbn$. Applying Situation (4) above, we see that $\alpha|_{[t_N,u]}\simeq\beta|_{[t_N,u]}$ for some $N$. Concatenation of path-homotopies shows that $\alpha|_{[0,u]}\simeq\beta|_{[0,u]}$. Hence, $u\in U$.

Finally, suppose $u<1$. There cannot be $u<t$ such that $u,t$ are consecutive in $A$ for Situation (1) and concatenation would imply that $u$ is a not an upper bound for $U$. Therefore, there must exist a sequence $u<\cdots <t_3<t_2<t_1$ in $A$ converging to $u$. However, Situation (3) implies that $\alpha|_{[u,t_N]}\simeq \beta|_{[u,t_N]}$ for some $N$. Concatenating with $\alpha|_{[0,u]}\simeq\beta|_{[0,u]}$ gives $\alpha|_{[0,t_N]}\simeq\beta|_{[0,t_N]}$; a contradiction that $u$ is an upper bound for $U$. Thus $u=1$.
\end{proof}

We also require the following lemma, which allows us to deal with endpoints of paths.

\begin{lemma}\label{isolatedendpoints}
Suppose $\alpha,\beta:\ui\to X$ are paths from $x_0$ to $x_1$ and $A$ is a homotopy cut-set for $\alpha$ and $\beta$.
\begin{enumerate}
\item If $x_0\notin \awild(X)$, then there exists a homotopy cut-set $B\subseteq A$ for $\alpha$ and $\beta$ such that $0$ is an isolated point of $B$.
\item If $x_1\notin \awild(X)$, then there exists a homotopy cut-set $B\subseteq A$ for $\alpha$ and $\beta$ such that $1$ is an isolated point of $B$.
\item If $x_0,x_1\notin \awild(X)$, then there exists a homotopy cut-set $B\subseteq A$ for $\alpha$ and $\beta$ such that $0$ and $1$ are isolated points of $B$.
\end{enumerate}
\end{lemma}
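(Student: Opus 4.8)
The plan is to handle parts (1) and (2) by a symmetric one‑endpoint argument and then deduce (3) by iterating. Consider (1) with $x_0\notin\awild(X)$. If $0$ is already an isolated point of $A$, take $B=A$. Otherwise $0$ is a right limit point of $A$, so we may choose a strictly decreasing sequence $1>t_1>t_2>t_3>\cdots$ in $A$ with $t_n\to 0$. Exactly as in Situation (3) of the proof of Lemma \ref{nonwildlemma1}, the loops $\gamma_n:=\alpha|_{[0,t_n]}\cdot\beta|_{[0,t_n]}^{-}$ form a null sequence based at $x_0$ (continuity of $\alpha$ and $\beta$ at $0$ guarantees that every neighborhood of $x_0$ contains the image of $\gamma_n$ for all large $n$), so they determine a map $f:(\bbh,b_0)\to(X,x_0)$ with $f\circ\ell_n=\gamma_n$. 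Since $x_0\notin\awild(X)$, some $\gamma_N=f\circ\ell_N$ is trivial, i.e. $\alpha|_{[0,t_N]}\simeq\beta|_{[0,t_N]}$.

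Now I would set $B=\{0\}\cup\bigl(A\cap[t_N,1]\bigr)$. Then $B\subseteq A$, $B$ is closed and nowhere dense, $\{0,1\}\subseteq B$, and $0$ is isolated in $B$ because $(0,t_N)\cap B=\emptyset$. The condition $\alpha|_B=\beta|_B$ holds since $\alpha(0)=x_0=\beta(0)$ and $A$ is a homotopy cut-set. For the component condition, note that $[0,1]\setminus B$ has $(0,t_N)$ as exactly one of its components, and $\alpha|_{[0,t_N]}\simeq\beta|_{[0,t_N]}$ by the choice of $N$; every other component $(a,b)$ of $[0,1]\setminus B$ satisfies $a,b\in A$ and $(a,b)\cap A=\emptyset$, hence $(a,b)\in\mci(A)$ and $\alpha|_{[a,b]}\simeq\beta|_{[a,b]}$ since $A$ is a homotopy cut-set. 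Thus $B$ is a homotopy cut-set for $\alpha$ and $\beta$, which establishes (1). Part (2) is proved identically after reflecting the interval (replacing Situation (3) by Situation (4)).

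For (3), I would first apply (1) to obtain a homotopy cut-set $B_1\subseteq A$ with $0$ isolated in $B_1$, and then apply (2) to $B_1$ (using $x_1\notin\awild(X)$) to obtain a homotopy cut-set $B_2\subseteq B_1$ with $1$ isolated in $B_2$. By the construction in (2), $B_2$ is obtained from $B_1$ by deleting a half-open interval near $1$ and re-adjoining $1$; in particular $0\in B_2$, and any right-neighborhood of $0$ disjoint from $B_1$ remains disjoint from $B_2\subseteq B_1$, so $0$ is still isolated in $B_2$. Hence $B_2$ is the required homotopy cut-set.

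I do not expect a genuine obstacle here. The one point requiring care is verifying that truncating $A$ near an endpoint preserves the component condition of a homotopy cut-set: one must see that the single ``new'' component $(0,t_N)$ is precisely the one handled by the $\bbh$/$\awild$ argument, while all remaining components are inherited verbatim from $\mci(A)$. The iteration for (3) is then automatic, since each step only deletes points from the cut-set; everything else is bookkeeping already carried out inside the proof of Lemma \ref{nonwildlemma1}.
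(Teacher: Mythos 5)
Your proof is correct and follows the paper's argument essentially verbatim: detect a sequence in $A$ converging to the endpoint, apply the Hawaiian-earring/$\awild$ argument (Situation (3) of Lemma \ref{nonwildlemma1}) to find $N$ with $\alpha|_{[0,t_N]}\simeq\beta|_{[0,t_N]}$, take $B=\{0\}\cup(A\cap[t_N,1])$, and obtain (3) by applying (1) and then (2). The extra verifications you include (that the new component $(0,t_N)$ is the only one not inherited from $\mci(A)$, and that the second truncation preserves isolation of $0$) are exactly the bookkeeping the paper leaves implicit.
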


\begin{proof}
We prove (1). The proof of (2) is symmetric and (3) follows by sequentially applying (1) and (2). If $0$ is already an isolated point of $A$, take $B=A$. Otherwise, there exists $\cdots <t_3<t_2<t_1$ in $A$ converging to $0$. Applying Situation (3) in Lemma \ref{nonwildlemma1} to $t=0$, there must be an $N\in\bbn$ such that $\alpha|_{[0,t_N]}\simeq \beta|_{[0,t_N]}$. The set $B=\{0\}\cup (A\cap [t_N,1])$ is the desired homotopy cut-set for $\alpha$ and $\beta$.
\end{proof}

\begin{lemma}\label{scatteruplemma}
Suppose $X$ has well-defined scattered $\Pi_1$-products. If $A$ is a homotopy cut-set for paths $\alpha,\beta:\ui\to X$ such that for all $a,b\in A\cap (0,1)$ with $a<b$, we have $\alpha|_{[a,b]}\simeq\beta|_{[a,b]}$, then $\alpha\simeq\beta$.
\end{lemma}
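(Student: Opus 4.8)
The plan is to produce a single \emph{scattered} homotopy cut-set $B\subseteq A$ for $\alpha$ and $\beta$ and then quote the well-defined scattered $\Pi_1$-products hypothesis once. The key observation is that the standing assumption collapses the whole interior $A\cap(0,1)$ to one component: since every pair $a<b$ in $A\cap(0,1)$ already satisfies $\alpha|_{[a,b]}\simeq\beta|_{[a,b]}$, the only components of a candidate $B$ that could fail to be ``homotopy components'' are the ones incident to the endpoints $0$ and $1$. Near each endpoint the set $A$ is order-theoretically tame (a convergent sequence, or a single adjacent point), so $B$ can be taken scattered. It bears emphasizing that we may \emph{not} simply splice together the countably many homotopies carried by the components of $A$ --- the Cantor-function phenomenon of the introduction shows this is illegitimate --- which is precisely why the hypothesis on $X$ must enter.

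First I would dispose of the trivial case $A=\{0,1\}$ (then $\alpha\simeq\beta$ because $A$ is a one-component homotopy cut-set) and assume $A\cap(0,1)\neq\emptyset$. Then I would build the ``left half'' $L$ of $B$ by cases. If $0$ is isolated in $A$, let $L=\{0,a^{\ast}\}$ where $a^{\ast}$ is the point of $A$ consecutive to $0$ (necessarily in $(0,1)$ since $A\cap(0,1)\neq\emptyset$). If $0$ is not isolated in $A$, then $0$ is a right-sided limit point of $A$, so one may recursively choose $a_1>a_2>\cdots$ in $A\cap(0,1)$ with $a_{n+1}<\min\{a_n,1/n\}$ and set $L=\{0\}\cup\{a_n\mid n\in\bbn\}$. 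Build the ``right half'' $R$ at the endpoint $1$ symmetrically. A little care in the choices arranges $\max L\leq\min R$ (this is automatic when both endpoints are isolated in $A$, since a point of $A$ strictly between $b^{\ast}$ and $a^{\ast}$ would contradict consecutiveness; otherwise one simply picks the free monotone sequence deep enough in $(0,1/2)$ or in $(1/2,1)$). Put $B=L\cup R$.

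Finally I would verify that $B$ is a scattered homotopy cut-set for $\alpha$ and $\beta$. It is closed and nowhere dense, being a finite union of finite sets and convergent sequences together with their limits; it contains $\{0,1\}$; and $\alpha|_{B}=\beta|_{B}$ because $B\subseteq A$. For a component $(c,d)\in\mci(B)$ with $c,d\in A\cap(0,1)$ --- which is every component except possibly the one touching an isolated endpoint --- the standing hypothesis gives $\alpha|_{[c,d]}\simeq\beta|_{[c,d]}$; and if $c=0$ is isolated in $A$, then $d=a^{\ast}$ is consecutive to $0$ in $A$, so $\alpha|_{[0,a^{\ast}]}\simeq\beta|_{[0,a^{\ast}]}$ because $A$ is a homotopy cut-set (similarly at $1$). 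Since the only non-isolated points of $B$ lie in $\{0,1\}$, every non-empty subspace of $B$ has an isolated point, i.e.\ $B$ is scattered. Applying the well-defined scattered $\Pi_1$-products hypothesis to $\alpha$, $\beta$, and $B$ yields $\alpha\simeq\beta$.

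The mathematics here is light; the only real obstacle is organizational --- correctly splitting into the ``endpoint isolated in $A$ / endpoint not isolated in $A$'' cases, ensuring the monotone sequences genuinely converge to $0$ and $1$ and that $L$ and $R$ do not interleave, and then confirming that every component of the assembled set $B$ is accounted for, either by the hypothesis of the lemma or (for the at most two endpoint-adjacent components) directly by the homotopy cut-set property of $A$.
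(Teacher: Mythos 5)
Your proposal is correct and follows essentially the same route as the paper: you build a scattered homotopy cut-set inside $A$ consisting of $\{0,1\}$ together with either a point of $A$ consecutive to each endpoint or a monotone sequence in $A\cap(0,1)$ converging to it, check the endpoint-adjacent components via the cut-set property of $A$ and all remaining components via the standing hypothesis, and then invoke well-defined scattered $\Pi_1$-products once. The paper organizes the same construction as four explicit cases according to whether $0$ and $1$ are isolated in $A$; your left-half/right-half assembly (plus the explicit treatment of $A=\{0,1\}$ and of possible interleaving) is just a repackaging of that argument.
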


\begin{proof}
We define a scattered homotopy cut-set $S\subseteq A$ for $\alpha$ and $\beta$ as follows. We consider four cases:
\begin{enumerate}
\item If $0$ and $1$ are isolated points in $A$, find $0<s<t<1$ in $A$ such that $0,s$ and $t,1$ are consecutive in $A$. Let $S=\{0,s,t,1\}$.
\item If $0$ is isolated in $A$ but $1$ is not, find $0<s<t_1<t_2<t_3<\cdots <1$ in $A$ with $0,s$ consecutive in $A$ and $\{t_n\}\to 1$. Let $S=\{0,s,t_1,t_2,t_3\dots,1\}$.
\item If $1$ is isolated in $A$ but $0$ is not, find $0<\cdots s_3<s_2<s_1<t<1$ in $A$ with $t,1$ consecutive in $A$ and $\{s_n\}\to 0$. Let $S=\{0,\dots,s_3,s_2,s_1,t,1\}$.
\item If neither $0$ nor $1$ is isolated in $A$, find $0<\cdots s_3<s_2<s_1<t_1<t_2<t_3<\cdots <1$ in $A$ with $\{s_n\}\to 0$ and $\{t_n\}\to 1$. Let $S=\{0,\dots,s_3,s_2,s_1,t_1,t_2,\dots,1\}$.
\end{enumerate}
In any of the four cases, $S$ is a scattered cut-set and, by assumption, is a homotopy cut-set for $\alpha$ and $\beta$. Since $X$ is assumed to have well-defined scattered $\Pi_1$-products, we have $\alpha\simeq\beta$.
\end{proof}

\begin{lemma}\label{awildsetreductiongoodlemma}
Suppose $X$ has well-defined scattered $\Pi_1$-products. If $A$ is a homotopy cut-set for paths $\alpha,\beta:\ui\to X$, then there exists a homotopy cut-set $B\subseteq A$ for $\alpha$ and $\beta$ such that $\alpha(B\cap (0,1))=\beta(B\cap (0,1))\subseteq \awild(X)$.
\end{lemma}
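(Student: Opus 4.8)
The plan is to delete from $A$ every cut point in $(0,1)$ whose image lies outside $\awild(X)$ while preserving the homotopy cut-set conditions, with Lemma \ref{nonwildlemma1} and Lemma \ref{scatteruplemma} doing the work. Write $A_w:=A\cap(0,1)\cap\alpha^{-1}(\awild(X))$ for the cut points to be kept and $G:=(A\cap(0,1))\setminus A_w$ for those to be removed, so that $\alpha(G)\subseteq X\setminus\awild(X)$.

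First I would pass to the closed set $C:=\{0,1\}\cup\overline{A_w}$; since $A$ is closed, $C\subseteq A$ and $C$ is a cut-set. To verify that $C$ is a homotopy cut-set for $\alpha$ and $\beta$, take a component $(a,b)$ of $\ui\setminus C$, so that $(a,b)\cap\overline{A_w}=\emptyset$ and hence $A\cap(a,b)\subseteq G$. If $A\cap(a,b)$ is finite, then $A\cap[a,b]$ is a finite homotopy cut-set for $\alpha|_{[a,b]}$ and $\beta|_{[a,b]}$, and finite concatenation of homotopies gives $\alpha|_{[a,b]}\simeq\beta|_{[a,b]}$. If $A\cap(a,b)$ is infinite, then for any $c<d$ in it the set $A\cap[c,d]$ is a homotopy cut-set for $\alpha|_{[c,d]}$ and $\beta|_{[c,d]}$ whose image, including the endpoints $c,d\in G$, lies in $X\setminus\awild(X)$, so $\alpha|_{[c,d]}\simeq\beta|_{[c,d]}$ by Lemma \ref{nonwildlemma1}; thus every subinterval of $A\cap[a,b]$ between interior cut points is homotopy-trivial and Lemma \ref{scatteruplemma} gives $\alpha|_{[a,b]}\simeq\beta|_{[a,b]}$. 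So $C$ is a homotopy cut-set, and it has eliminated every interior cut point of $A$ outside $\overline{A_w}$; the interior cut points of $C$ that remain are those in $A_w$ together with the bad set $D:=(\overline{A_w}\cap(0,1))\setminus A_w$.

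Each $t\in D$ is a limit point of $A_w\subseteq\alpha^{-1}(\wild(X))$, and $\alpha^{-1}(\wild(X))$ is closed by Lemma \ref{wildisclosedlemma}, so $\alpha(t)\in\wild(X)\setminus\awild(X)$ and in particular $\alpha(t)\notin\awild(X)$. The second step removes $D$. For each $t\in D$ I would build, from a left piece and a right piece meeting at $t$, an open interval $(a_t,b_t)\ni t$ with $a_t,b_t\in C$ and $\alpha|_{[a_t,b_t]}\simeq\beta|_{[a_t,b_t]}$: on a side from which $A_w$ accumulates at $t$, the argument of Situation (3) or (4) in the proof of Lemma \ref{nonwildlemma1} (applicable since $\alpha(t)\notin\awild(X)$) gives a homotopy-trivial piece ending at a point of $A_w$; on a side from which $A_w$ does not accumulate, $t$ is isolated in $\overline{A_w}$ from that side and the consecutive neighbor of $t$ in $C$ bounds a homotopy-trivial piece because $C$ is a homotopy cut-set. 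I would then extract a subfamily $\{(a_i,b_i)\}_{i\in I}$ of these intervals with pairwise disjoint closures whose union still contains $D$, and put $B:=C\setminus\bigcup_{i\in I}(a_i,b_i)$. This $B$ is closed (the removed set is open), satisfies $\{0,1\}\subseteq B\subseteq A$ and $B\cap(0,1)\subseteq\alpha^{-1}(\awild(X))$, and is a homotopy cut-set: each $(a_i,b_i)$ is a component of $\ui\setminus B$ over which $\alpha\simeq\beta$ by construction, and every other component of $\ui\setminus B$ is a component of $\ui\setminus C$.

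The main obstacle is the extraction of this pairwise-disjoint covering subfamily. Each $(a_t,b_t)$ is known to be homotopy-trivial only as a whole and not on its subintervals, so two overlapping such intervals need not combine into a single homotopy-trivial interval---this is precisely the transfinite-reduction difficulty. I would first dispose of those $t\in D$ that are isolated in $\overline{A_w}$ from one side: each determines a distinct component of $\ui\setminus\overline{A_w}$, so there are only countably many, and they (along with the cut points inside their intervals) can be removed in a countable sequence of reductions, after which every bad point is a two-sided limit point of $A_w$. For these I expect to need a transfinite recursion exploiting the separability and Cantor--Bendixson structure of $A$ to choose the intervals $(a_i,b_i)$ coherently, and making that recursion work is the hard part.
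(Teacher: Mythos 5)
Your first step is sound: replacing $A$ by $C=\{0,1\}\cup\overline{A_w}$ and using Lemma \ref{nonwildlemma1} together with Lemma \ref{scatteruplemma} on the components of $\ui\setminus C$ is a correct reduction. The gap is in your second step, and you have named it yourself: the entire difficulty of the lemma is concentrated in the set $D=(\overline{A_w}\cap(0,1))\setminus A_w$, which exists precisely because $\alpha|_{A}^{-1}(\awild(X))$ need not be closed (this is the point of Example \ref{example2}, and it is why the ``simpler proof'' using $\wild(X)$ and Lemma \ref{wildisclosedlemma} does not transfer). Your plan produces, for each $t\in D$, an interval $[a_t,b_t]$ on which $\alpha\simeq\beta$, but these homotopy-trivial intervals cannot be merged when they overlap, their endpoints may themselves lie in $D$, and $D$ can be uncountable; so the extraction of a pairwise-disjoint subfamily still covering $D$, with endpoints surviving into $B$, is not a routine Vitali-type selection. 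Your fallback --- a countable sequence of reductions followed by a transfinite recursion on the Cantor--Bendixson structure --- is exactly the kind of argument that requires a limit-stage lemma (compare Lemma \ref{limitordinallemma}, whose hypothesis (3) demands homotopy-triviality of the gap pieces between successive cut-sets and whose proof again invokes Lemma \ref{scatteruplemma}); you do not supply it, and you explicitly flag it as the unresolved ``hard part.'' As written, the proposal is therefore an outline with its central step missing, not a proof.

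The paper avoids this difficulty by never taking the preimage of $\awild(X)$ at all. It forms the space $\bbw_A$ obtained from two copies of $\ui$ glued along $A$, with the induced map $f:\bbw_A\to X$ satisfying $f\circ p_{\alpha}=\alpha$, $f\circ p_{\beta}=\beta$, and declares $t\in A\cap(0,1)$ deletable when some basic neighborhood $N(t,\epsilon)$ in $\bbw_A$ contains no path that $f$ sends to a non-trivial loop. This deletable set $U$ is open in $A$ by definition, so $B=A\setminus U$ is closed with no analogue of your set $D$; each $t\in B\cap(0,1)$ maps into $\awild(X)$ because the nested basis $\{N(t,1/n)\}$ yields a map $(\bbh,b_0)\to(X,\alpha(t))$ whose loops are non-trivial; and $B$ is a homotopy cut-set because, for $a<b$ in $A$ inside a component of $\mci(B)$, the subdivision Lemma \ref{subdivisionlemma} applied in $\bbw_A$ splits $[a,b]$ into finitely many pieces that are either components of $\mci(A)$ or carried into a single ``loop-killing'' neighborhood, after which Lemma \ref{scatteruplemma} finishes. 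In short, the intrinsic definition of the deleted set via $\bbw_A$ removes the need for any transfinite selection; the transfinite machinery is reserved for Theorem \ref{bigtheorem}, where it is carried out with Lemma \ref{limitordinallemma} under hypotheses your sketch does not establish here.
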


\begin{proof}
Let $\ui_{\alpha}$ and $\ui_{\beta}$ be disjoint copies of the unit interval and let $\bbw_{A}=\ui_{\alpha}\cup\ui_{\beta}/\mathord{\sim}$ where we identify $t_{\alpha}\mathord{\sim} t_{\beta}$ if $t\in A$. When $t\in A$ we simply write $t$ for the image of $\{t_{\alpha},t_{\beta}\}$ in $\bbw_A$. Note that $\bbw_{A}$ is a one-dimensional Peano continuum homeomorphic to the planar construction given in \cite{BrazScattered}. Let $p_{\alpha}:\ui\to \bbw_{A}$ and $p_{\beta}:\ui\to \bbw_{A}$ be the paths, which are the restriction of the quotient map to $\ui_{\alpha}$ and $\ui_{\beta}$ respectively. Together, the paths $\alpha,\beta$ uniquely induce a map $f:\bbw_{A}\to X$ given by $f\circ p_{\alpha}=\alpha$ and $f\circ p_{\beta}=\beta$. 

Let $U$ be the set of $t\in A\cap (0,1)$ such that there exists an $\epsilon>0$ so the path-connected basic open set $N(t,\epsilon)=p_{\alpha}((t-\epsilon,t+\epsilon))\cup p_{\beta}((t-\epsilon,t+\epsilon))$ is contained in $\bbw_A\backslash \{0,1\}$ and contains no path $\gamma$ such that $f\circ\gamma$ is a non-trivial loop in $X$. Clearly, $U$ is open in $A$. Hence, $B=A\backslash U$ is closed in $A$ and thus closed and nowhere dense in $\ui$.

First, we check that $\alpha(B\cap (0,1))\subseteq \awild(X)$. Let $t\in B\cap (0,1)$. Recall that $\{N(t,1/n)\}_{n\in\bbn}$ is a neighborhood base of path-connected open sets at $t$. Then there exists a path $\gamma_n:\ui\to N(t,1/n)$ such that $f\circ\gamma_n$ is a non-trivial loop in $X$. Find a path $\delta_n:\ui\to N(t,1/n)$ from $t$ to $\gamma_{n}(0)$. Define $g:(\bbh,b_0)\to (X,f(t))$ by $g\circ\ell_n=(f\circ\delta_n)\cdot (f\circ\gamma_n)\cdot(f\circ\delta_n)^{-}$. The continuity of $g$ at $b_0$ follows directly from the continuity of $f$ and fact that the basis at $t$ is nested. Moreover, $g\circ\ell_n$ cannot be null-homotopic for any $n$ since $f\circ\gamma_n$ is not. We conclude that $\alpha(t)=f(t)\in\awild(X)$.

Next, we use Lemma \ref{scatteruplemma} to show that $B$ is a homotopy cut-set for $\alpha$ and $\beta$. Given $(c,d)\in\mci(B)$, we must show that $\alpha|_{[c,d]}\simeq\beta|_{[c,d]}$. Consider any two elements $a<b$ in $A\cap (c,d)$. By Lemma \ref{scatteruplemma}, it suffices to show $\alpha|_{[a,b]}\simeq\beta|_{[a,b]}$. Note that if $t\in A\cap [a,b]$, then $t\in A\backslash B=U$ and there must exist a basic open neighborhood $N(t,\epsilon_t)$ of $t$ in $\bbw_A$ such that $f$ maps no path in $N(t,\epsilon_t)$ to a non-trivial loop in $X$. Then $\scrv=\{N(t,\epsilon_t)\mid t\in A\cap [a,b]\}$ is an open cover of $(p_{\alpha})(A\cap [a,b])=(p_{\beta})(A\cap [a,b])$ by open sets in $\bbw_A$ such that $f$ maps no path in an element of $\scrv$ to a non-trivial loop in $X$. Applying Lemma \ref{subdivisionlemma} to the paths $p_{\alpha}$, $p_{\beta}$ and cut-set $A\cap [a,b]$, we may find a subdivision $a=t_0<t_1<t_2<\cdots <t_m=b$ of points in $A$ such that, for each $j\in \{1,2,\dots ,m\}$, either $t_{j-1}$ and $t_j$ are consecutive points in $A$ or $p_{\alpha}([t_{j-1},t_j])\cup p_{\beta}([t_{j-1},t_j])$ lie in some element of $\scrv$. If $t_{j-1}$ and $t_j$ are consecutive in $A$, then $(t_{j-1},t_j)\in\mci(A)$ and we have $\alpha|_{[t_{j-1},t_j]}\simeq \beta|_{[t_{j-1},t_j]}$. In the second case, $(p_{\alpha})|_{[t_{j-1},t_j]}\cdot (p_{\beta})|_{[t_{j-1},t_j]}^{-}$ is a loop in some element of $\scrv$ and thus $f\circ ((p_{\alpha})|_{[t_{j-1},t_j]}\cdot (p_{\beta})|_{[t_{j-1},t_j]}^{-})=\alpha|_{[t_{j-1},t_j]}\cdot\beta|_{[t_{j-1},t_j]}^{-}$ is null-homotopic in $X$. Since $\alpha|_{[t_{j-1},t_j]}\simeq \beta|_{[t_{j-1},t_j]}$ for all $j\in\{1,2,\dots ,m\}$, we have $\alpha|_{[a,b]}\simeq\beta|_{[a,b]}$.
\end{proof}

\begin{remark}
If we replace $\awild(X)$ with $\wild(X)$ in Lemma \ref{awildsetreductiongoodlemma}, a simpler proof is possible: it follows from Lemma \ref{wildisclosedlemma} that if $A$ is a homotopy cut-set for $\alpha$ and $\beta$, then $\alpha^{-1}(\wild(X))$ is closed in $\ui$. Combining Lemma \ref{nonwildlemma1} and Lemma \ref{scatteruplemma} with the fact that $\awild(X)\subseteq \wild(X)$, it is straightforward to check that $B=(A\cap \alpha^{-1}(\wild(X)))\cup\{0,1\}$ is a homotopy cut-set for $\alpha$ and $\beta$ for which $B\cap (0,1)$ is mapped into $\wild(X)$. Unfortunately, we cannot apply this idea in the more general situation since Example \ref{example2} shows that $\alpha|_{A}^{-1}(\awild(X))$ need not be closed.
\end{remark}

\subsection{Reduction to perfect cut-sets}

Recall that a subset $B\subseteq X$ is \textit{perfect (in $X$)} if $B$ is closed in $X$ and $B$ has no isolated points. Every nowhere-dense perfect subset of $\ui$ is homeomorphic to a Cantor set.

\begin{lemma}\label{perfectreductionlemma}
Suppose $X$ has well-defined scattered $\Pi_1$-products. If $A$ is a homotopy cut-set for paths $\alpha,\beta:\ui\to X$, then either $\alpha\simeq\beta$ or there exists a perfect set $P\subseteq A$ such that 
\begin{enumerate}
\item $B=P\cup\{0,1\}$ is a homotopy cut-set for $\alpha$ and $\beta$,
\item if $(c,d)\in\mci(B)$ contains $(a,b)\in\mci(A)$, then $\alpha|_{[c,a]}\simeq \beta|_{[c,a]}$ when $c<a$ and $\alpha|_{[b,d]}\simeq \beta|_{[b,d]}$ when $b<d$.
\end{enumerate}
\end{lemma}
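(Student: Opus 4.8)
The plan is to start from the homotopy cut-set $A$ and pass, via the Cantor–Bendixson decomposition of $A$, to its perfect kernel $P$. Recall that for a compact subset $A\subseteq\ui$ we may write $A = P \sqcup S$ (as sets), where $P$ is the largest perfect subset of $A$ (the Cantor–Bendixson kernel, obtained by transfinitely removing isolated points) and $S = A\setminus P$ is scattered and relatively open in $A$. First I would set $B = P\cup\{0,1\}$ and note that, since $P$ is closed and nowhere dense and $S$ is relatively open in $A$, each component $(c,d)\in\mci(B)$ either lies in $\mci(A)$ already or meets $A$ only in points of $S$ together with possibly finitely many components of $\mci(A)$ in its interior; more precisely, $A\cap(c,d)$ is a scattered compact-in-itself set sitting inside the scattered "gap" $(c,d)$. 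The key observation is that $\alpha|_A=\beta|_A$ still holds on $B\subseteq A$, so conditions (1)(a) and (2) are really claims about the restrictions over these gaps.

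The main work is to show that over each gap $(c,d)\in\mci(B)$ the paths are path-homotopic (which gives (1), i.e. $B$ is a homotopy cut-set) and moreover that $\alpha$ and $\beta$ are path-homotopic on the initial segment $[c,a]$ and terminal segment $[b,d]$ of any $(a,b)\in\mci(A)$ contained in $(c,d)$ (which gives (2)). For this I would invoke Lemma \ref{scatteruplemma}: it suffices to verify that for all $a'<b'$ in $A\cap(c,d)$ we have $\alpha|_{[a',b']}\simeq\beta|_{[a',b']}$, because $A\cap[c,d]$ is a homotopy cut-set for $\alpha|_{[c,d]}$ and $\beta|_{[c,d]}$ (Remark on restrictions of homotopy cut-sets) and, being a subset of the scattered set $S\cup(\text{cuts of finitely many gaps})$... — here is the crux. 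Every point of $A\cap(c,d)$ lies in $S$ (the scattered part) unless it is a boundary point of some $(a,b)\in\mci(A)$ with $(a,b)\subseteq(c,d)$; but those boundary points also belong to $A$, hence to $P\cup S$, and since they lie in the open gap $(c,d)$ they cannot lie in $P$ (a point of $P$ interior to a component of $\complement B$ would contradict $P\subseteq B$). So in fact $A\cap(c,d)\subseteq S$ entirely, and therefore $A\cap[a',b']$ is a \emph{scattered} compact set for any $a'<b'$ in this gap. Then $A\cap[a',b']$ is a scattered homotopy cut-set for $\alpha|_{[a',b']}$ and $\beta|_{[a',b']}$, so by well-defined scattered $\Pi_1$-products $\alpha|_{[a',b']}\simeq\beta|_{[a',b']}$, and Lemma \ref{scatteruplemma} closes the gap.

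Finally I would handle the dichotomy: if $P=\emptyset$ then $A$ itself is scattered, so $A\cup\{0,1\}$ is a scattered homotopy cut-set and well-defined scattered $\Pi_1$-products gives $\alpha\simeq\beta$ directly — this is the first alternative in the statement. If $P\neq\emptyset$, then $B=P\cup\{0,1\}$ is nonempty, perfect away from possibly the two endpoints, and the argument above establishes (1) and (2). For (2) specifically, given $(c,d)\in\mci(B)$ containing $(a,b)\in\mci(A)$ with $c<a$, the points of $A\cap[c,a]$ again lie entirely in $S$ (the same interior-of-a-gap reasoning applies, and $a\in A\cap(c,d)$), so $A\cap[c,a]$ is a scattered homotopy cut-set for $\alpha|_{[c,a]}$ and $\beta|_{[c,a]}$ and scattered well-definedness yields $\alpha|_{[c,a]}\simeq\beta|_{[c,a]}$; the terminal segment is symmetric. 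The main obstacle, as signalled above, is the bookkeeping showing that everything inside a gap $(c,d)$ of $B$ is scattered — i.e. that passing to the perfect kernel really does quarantine all the scattered "debris" of $A$ into the gaps — after which each application is a routine invocation of scattered well-definedness plus Lemma \ref{scatteruplemma}.
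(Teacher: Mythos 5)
Your proposal is correct and follows essentially the paper's own argument: Cantor--Bendixson kernel $P$, set $B=P\cup\{0,1\}$, observe that $A$ inside any gap $(c,d)\in\mci(B)$ lies in the scattered remainder $A\setminus P$, then finish each gap with scattered well-definedness plus Lemma \ref{scatteruplemma}, with the case $P=\emptyset$ giving $\alpha\simeq\beta$ outright. The only cosmetic differences are that for condition (2) the paper routes through Lemma \ref{scatteruplemma} once more while you apply scattered well-definedness directly to $A\cap[c,a]$ and $A\cap[b,d]$ --- just note that the endpoint $c$ (or $d$) may itself lie in $P$ rather than $S$, which is harmless since adjoining a single point to a scattered subset of $\bbr$ keeps it scattered.
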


\begin{proof}
If $A$ is scattered, then we have $\alpha\simeq\beta$ since $X$ is assumed to have well-defined scattered $\Pi_1$-products. If $A$ is not scattered, then, by the Cantor-Bendixson Theorem \cite{KechrisDST}, $A$ is the union of a non-empty perfect subset $P$ and a (countable) scattered subset $A\backslash P$. Let $B=P\cup\{0,1\}$ and fix $(c,d)\in\mci(B)$. We apply Lemma \ref{scatteruplemma} to the paths $\alpha|_{[c,d]}$ and $\beta|_{[c,d]}$ to show they are path-homotopic. Let $a,b\in A \cap (c,d)$ are any points such that $a<b$. Since $A\cap[a,b]$ lies in the scattered space $A\backslash P$ and every subspace of a scattered space is scattered, the set $A\cap [a,b]$ is scattered. Thus $A\cap[a,b]$ is a scattered homotopy cut-set for $\alpha|_{[a,b]}$ and $\beta|_{[a,b]}$. Since $X$ is assumed to have well-defined scattered $\Pi_1$-products, we have $\alpha|_{[a,b]}\simeq \beta|_{[a,b]}$. Therefore, Lemma \ref{scatteruplemma} applies to give $\alpha|_{[c,d]}\simeq \alpha|_{[c,d]}$. This proves (1).

For (2), suppose $(a,b)\in\mci(A)$ is contained in $(c,d)\in\mci(B)$. If $c<a$ and $A\cap [c,a]$ is finite, then $\alpha|_{[c,a]}\simeq \beta|_{[c,a]}$ is clear. Otherwise, we apply the argument used in the previous paragraph: given any $s,t\in A$ with $a<s<t<c$, the set $A\cap [s,t]$ is a scattered homotopy cut-set for $\alpha|_{[s,t]}$ and $\beta|_{[s,t]}$ and thus $\alpha|_{[s,t]}\simeq\beta|_{[s,t]}$. Lemma \ref{scatteruplemma} applies to give $\alpha|_{[c,a]}\simeq \beta|_{[c,a]}$. The symmetric argument may be used to show that $\alpha|_{[b,d]}\simeq \beta|_{[b,d]}$ when $b<d$.
\end{proof}

\subsection{Reduction at isolated image points}

\begin{lemma}\label{inductivesteplemma}
Suppose $X$ has well-defined transfinite $\pi_1$-products and $A$ is a homotopy cut-set for $\alpha,\beta:\ui\to X$. If $x$ is an isolated point of $\alpha(A)$ and $\alpha(A)\backslash \{x\}\neq \emptyset$, then there exists a homotopy cut-set $B$ for $\alpha$ and $\beta$ such that
\begin{enumerate}
\item $B\subseteq A$,
\item $\alpha(B\cap (0,1))\subseteq \alpha(A\cap (0,1))\backslash \{x\}$,
\item if $(c,d)\in \mci(B)$ is the component containing $(a,b)\in\mci(A)$, then $\alpha|_{[c,a]}\simeq \beta|_{[c,a]}$ when $c<a$ and $\alpha|_{[b,d]}\simeq \beta|_{[b,d]}$ when $b<d$.
\end{enumerate}
\end{lemma}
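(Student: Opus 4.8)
The plan is to delete from $A$ every point that $\alpha$ carries to $x$, keeping only $0$ and $1$, and to verify that the resulting set is still a homotopy cut-set. Fix an open set $V\subseteq X$ with $V\cap\alpha(A)=\{x\}$ and put
\[
A_x=A\cap\alpha^{-1}(V)=\{t\in A\mid\alpha(t)=x\},\qquad B=(A\setminus A_x)\cup\{0,1\}.
\]
Since $A\setminus A_x=A\cap\alpha^{-1}(X\setminus V)$ is closed in $\ui$, the set $B$ is a cut-set, and $B\subseteq A$, so $\alpha|_B=\beta|_B$; this gives (1). Because $B\cap(0,1)=(A\cap(0,1))\setminus A_x$, no point of $B\cap(0,1)$ is carried to $x$, so $\alpha(B\cap(0,1))\subseteq\alpha(A\cap(0,1))\setminus\{x\}$, which is (2). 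The hypothesis $\alpha(A)\setminus\{x\}\neq\emptyset$ guarantees $A\setminus A_x\neq\emptyset$, so there is genuinely something to delete. The content of the lemma is that $B$ is a homotopy cut-set and satisfies (3).

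Now fix a component $(c,d)\in\mci(B)$. Every point of $A$ lying in $(c,d)$ belongs to $A_x$, so $\alpha$ and $\beta$ both map $(c,d)\cap A$ to $x$. If $(c,d)\cap A=\emptyset$, then $(c,d)\in\mci(A)$ and $\alpha|_{[c,d]}\simeq\beta|_{[c,d]}$ already. Otherwise let $p=\inf\big((c,d)\cap A\big)$ and $q=\sup\big((c,d)\cap A\big)$. One checks that $p$ and $q$ are attained, so that $\alpha(p)=\alpha(q)=x$, and that whenever $c<p$ one has $(c,p)\in\mci(A)$, hence $\alpha|_{[c,p]}\simeq\beta|_{[c,p]}$, with the symmetric statement for $q<d$. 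After concatenating these path-homotopies at the two ends, the comparison of $\alpha|_{[c,d]}$ and $\beta|_{[c,d]}$ reduces to showing $\alpha|_{[p,q]}\simeq\beta|_{[p,q]}$ — and $\alpha|_{[p,q]}$ and $\beta|_{[p,q]}$ are now \emph{loops based at the common point $x$}. This is where the hypothesis is used, exactly once: $A\cap[p,q]$ is a homotopy cut-set for these two loops (it is closed and nowhere dense, contains $\{p,q\}$, the loops agree on it, and each of its gap components is a component of $\mci(A)$ on which $\alpha$ and $\beta$ are path-homotopic), so well-defined transfinite $\pi_1$-products give $\alpha|_{[p,q]}\simeq\beta|_{[p,q]}$. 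Hence $\alpha|_{[c,d]}\simeq\beta|_{[c,d]}$ and $B$ is a homotopy cut-set. Property (3) follows by running the same ``peel off the two end gaps, leaving a loop at $x$'' argument on the sub-interval $[c,a]$ (whose interior cut-points all lie in $A_x$ and whose right endpoint satisfies $\alpha(a)=x$, since $a\in(c,d)\cap A$) to obtain $\alpha|_{[c,a]}\simeq\beta|_{[c,a]}$ when $c<a$, and symmetrically on $[b,d]$ when $b<d$.

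The step I expect to require the most care is the reduction in the second paragraph: confirming that $\inf$ and $\sup$ of $(c,d)\cap A$ are attained and land at $x$ — so that the surviving middle factor is an honest loop based at $x$ rather than merely a path with endpoints near $x$ — together with the bookkeeping for the cases where $c$ or $d$ is one of the endpoints $0,1$. This rests on understanding how $A_x$ sits inside $A$ (that $A\setminus A_x$ is relatively closed, and the ways a point of $A\setminus A_x$ can or cannot be approached by points of $A_x$). Everything else is concatenation of path-homotopies plus the single invocation of the transfinite $\pi_1$-hypothesis on $\alpha|_{[p,q]}$.
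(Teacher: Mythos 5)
Your proposal is correct and its engine is the same as the paper's: you take exactly the paper's set $B=(A\setminus\alpha|_{A}^{-1}(x))\cup\{0,1\}$, and the one invocation of well-defined transfinite $\pi_1$-products on the merged $x$-based loop, flanked by at most two components of $\mci(A)$ on which $A$ already gives path-homotopies, is precisely the paper's key step. The bookkeeping, however, is genuinely different and arguably cleaner. The paper argues globally: it first shows that ``transition components'' of $\mci(A)$ (one endpoint mapped to $x$, the other not) exist and, since $x$ is isolated in $\alpha(A)$, are finite in number; it then cuts $\ui$ into finitely many alternating blocks and runs a three-case analysis of how components of $\mci(B)$ arise, including separate endpoint cases. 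You work locally inside a single $(c,d)\in\mci(B)$, where the at most two end gaps $(c,p)$ and $(q,d)$ play the role of the transition components, so no finiteness argument or case enumeration is needed, and condition (3) comes from rerunning the same peeling on $[c,a]$ and $[b,d]$. The one spot where the substance of the paper's analysis resurfaces in your write-up is the sentence ``one checks that $p$ and $q$ are attained'': the justification is that if, say, $p=c$ with $c\in B\cap(0,1)$, then $c$ would be a limit of points of $A$ mapped to $x$, so continuity would force $\alpha(c)=x$ (the same continuity inference the paper makes in its transition-component argument), contradicting $c\notin\alpha|_{A}^{-1}(x)$; if instead $c\in\{0,1\}$, the infimum need not be attained, but then $\alpha(c)=x$ and there is simply no left gap, so the middle piece is still a loop at $x$ and your argument runs verbatim. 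Write that out (with the symmetric statement for $q$, and the same remark inside the verification of (3)) and the proof is complete.
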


\begin{proof}
Define $B=A\backslash \alpha|_{A}^{-1}(x)\cup \{0,1\}$. Since $\{x\}$ is open in $\alpha(A)$, it's clear that $B$ is closed in $A$ and that $B$ is nowhere dense. Conditions (1) and (2) are clearly satisfied under this definition. We must closely analyze the situation to verify that $B$ is a homotopy cut-set for $\alpha$ and $\beta$ and that Condition (3) holds. 

Call a component $(a,b)\in \mci(A)$ a \textit{transition component} if either $\alpha(a)=x$ and $\alpha(b)\neq x$ or $\alpha(a)\neq x$ and $\alpha(b)=x$. We think of these components as describing the segments of the path $\alpha$ that travel between $\{x\}$ and $\alpha(A)\backslash \{x\}$. First, we must show that at least one transition component exists. Suppose otherwise. Then for all $(a,b)\in\mci(A)$, either $\alpha(\{a,b\})=\{x\}$ or $\alpha(\{a,b\})\subseteq \alpha(A)\backslash \{x\}$. Since $x\in\alpha(A)$ and $\alpha(A)\backslash \{x\}\neq \emptyset$, we are now guaranteed intervals $(p,q),(s,t)\in\mci(A)$ such that $\alpha(\{p,q\})=x$ and $\alpha(\{s,t\})\subseteq \alpha(A)\backslash \{x\}$. Using reverse path symmetry, if necessary, we may assume $q<s$. Let $V=\sup\{v \in A\cap [0,s)\mid (u,v)\in\mci(A),\alpha(\{u,v\})=x\}$. Since $A$ is closed, we have $V\in A$ and the continuity of $\alpha$ gives $\alpha(V)=x$. Since we have assumed no transition components exist, the definition of $V$ guarantees that $V$ cannot be the left endpoint of an element of $\mci(A)$. Therefore, we must have an infinite sequence $V<\cdots s_3<t_3<s_2<t_2<s_1<t_1<s<t$ converging to $V$ where $(s_n,t_n)\in\mci(A)$ and $\alpha(\{s_n,t_n\})\subseteq \alpha(A)\backslash\{x\}$. The continuity of $\alpha$ implies that $\{\alpha(s_n)\}\to \alpha(V)$ in $X$ and thus in the subspace $\alpha(A)$. However, since $\alpha(A)\backslash\{x\}$ is closed in $\alpha(A)$, we have $\alpha(V)\in\alpha(A)\backslash\{x\}$; a contradiction. We conclude that a transition component exists.

Since $x$ is a not a limit point of $\alpha(A)$, there can only be finitely many transition components in $\mci(A)$. Enumerate the transition components of $\mci(A)$ as $(a_1,b_1),(a_2,b_2),\dots, (a_m,b_m)$. Let $A_0=A\cap[0,a_1]$, $A_j=A\cap[b_j,a_{j+1}]$, and $A_m=A\cap[b_m,1]$ where it is possible that any given $A_j$ is degenerate, i.e equal to a point. For each $j\in \{0,1,2,\dots,m\}$, either $\alpha(A_j)=x$ or $\alpha(A_j)\subseteq \alpha(A)\backslash \{x\}$ (for otherwise, the second paragraph would imply the existence of another transition component). Let $P=\{j\mid \alpha(A_j)=x\}$ and $Q=\{j\mid \alpha(A_j)\subseteq \alpha(A)\backslash \{x\}\}$ and note that the elements in $P$ and $Q$ must alternate within $\{1,2,\dots,m\}$.

To check that $B$ is a homotopy cut-set for $\alpha$ and $\beta$, we consider the possible ways in which a component $(c,d)\in\mci(B)$ may be constructed by deleting elements $t\in \alpha|_{A}^{-1}(x)\cap (0,1)$. Note that such $t$ will only be deleted if they lie in $A_j$ for $j\in P$ and are neither $0$ nor $1$.
\begin{enumerate}
\item[] Case I: $(c,d)$ could result from joining two consecutive transition components $(a_j,b_j)$ and $(a_{j+1},b_{j+1})$ where $A_j=\{b_j\}=\{a_{j+1}\}$ is degenerate and $\alpha(A_j)=x$. In this situation, we have $\alpha(\{a_j,b_{j+1}\})\subseteq \alpha(A)\backslash \{x\}$ and we get a component $(c,d)=(a_j,b_{j+1})\in\mci(B)$. Since $A$ is a homotopy cut-set for $\alpha$ and $\beta$, we have $\alpha|_{[a_j,b_j]}\simeq \beta|_{[a_j,b_j]}$ and $\alpha|_{[a_{j+1},b_{j+1}]}\simeq \beta|_{[a_{j+1},b_{j+1}]}$. Thus  $\alpha|_{[c,d]}\equiv\alpha|_{[a_j,b_j]}\cdot\alpha|_{[a_{j+1},b_{j+1}]}\simeq \beta|_{[a_{j},b_{j}]}\cdot \beta|_{[a_{j+1},b_{j+1}]}\equiv\beta|_{[c,d]}$. Observe that Condition (3) holds in this case.
\item[] Case II: $(c,d)$ could result from combining $(a_j,b_j)$, $[b_j,a_{j+1}]$, and $(a_{j+1},b_{j+1})$ into a single interval if $b_j<a_{j+1}$ and $\alpha(A_j)=x$. In this situation, $\alpha(\{a_j,b_{j+1}\})\subseteq \alpha(A)\backslash \{x\}$ and we get a component $(c,d)=(a_j,b_{j+1})\in\mci(B)$. As in the previous case, we have $\alpha|_{[a_j,b_j]}\simeq \beta|_{[a_j,b_j]}$ and $\alpha|_{[a_{j+1},b_{j+1}]}\simeq \beta|_{[a_{j+1},b_{j+1}]}$. Since $A_j$ is a homotopy cut-set for loops $\alpha|_{[b_j,a_{j+1}]}$ and $\beta|_{[b_j,a_{j+1}]}$ based at $x$ and $X$ is assumed to have well-defined transfinite $\pi_1$-products, we have $\alpha|_{[b_j,a_{j+1}]}\simeq\beta|_{[b_j,a_{j+1}]}$. Concatenation gives $\alpha|_{[c,d]}\simeq\beta|_{[c,d]}$. Using these homotopies, Condition (3) is obvious in the case when $(a,b)$ is one of $(a_j,b_j)$ or $(a_{j+1},b_{j+1})$. Otherwise, suppose $(a,b)\in\mci(A)$ lies in $[b_j,a_{j+1}]$. Then $A\cap[b_j,a]$ is a homotopy cut-set for the (possibly degenerate) loops $\alpha|_{[b_j,a]}$ and $\beta|_{[b_j,a]}$ based at $x$. Similarly $A\cap[b,a_{j+1}]$ is a homotopy cut-set for (possibly degenerate) loops $\alpha|_{[b_j,a]}$ and $\beta|_{[b_j,a]}$ based at $x$. Using the assumption that $X$ has well-defined transfinite $\pi_1$-products, we have $\alpha|_{[b_j,a]}\simeq\beta|_{[b_j,a]}$ and $\alpha|_{[b,a_{j+1}]}\simeq\beta|_{[b,a_{j+1}]}$. Since $[c,a]=[a_j,b_j]\cup[b_j,a]$ and $[b,d]=[b,a_{j+1}]\cup[a_{j+1},b_{j+1}]$, combining these two path-homotopies with those on $[a_j,b_j]$ and $[a_{j+1},b_{j+1}]$ shows that $\alpha|_{[c,a]}\simeq\beta|_{[c,a]}$ and $\alpha|_{[b,d]}\simeq\beta|_{[b,d]}$. Thus Condition (3) holds.
\item[] Case III: There are also four possible cases at the endpoints where $\alpha(A_0)=x$ or $\alpha(A_m)=x$. If $A_0$ (resp. $A_m$) is degenerate, then no deletion occurs. If $A_0$ is not degenerate, $A_0$ is a homotopy cut-set for the loops $\alpha|_{[0,a_1]}$ and $\beta|_{[0,a_1]}$ based at $x$. Applying a one-sided version of the argument in Case II results in an interval $(c,d)=(0,b_1)\in\mci(B)$. Note that $0\in B$ despite $\alpha(0)=x$. Condition (3) also holds by the one-sided analogue of the argument used in Case II. If $A_m$ is not degenerate, the symmetric argument applies.
\end{enumerate}
Since $B$ is obtained from $A$ by removing the points of $\alpha|_{A}^{-1}(x)\cap (0,1)$ that lie in an alternating finite sequence of the sets $A_j$, a component $(c,d)\in\mci(B)$ can only result from the above cases. We conclude that $B$ is a homotopy cut-set for $\alpha$ and $\beta$ satisfying Conditions (1)-(3).
\end{proof}

\subsection{Transfinite sequences of reductions}

\begin{lemma}\label{limitordinallemma}
Suppose $X$ has well-defined scattered $\Pi_1$-products, $\kappa$ is a countable limit ordinal and $\{A_{\lambda}\}_{\lambda<\kappa}$ is a transfinite sequence of homotopy cut-sets for paths $\alpha,\beta:\ui\to X$ such that 
\begin{enumerate}
\item for any $\lambda<\kappa$, we have $A_{\lambda+1}\subseteq A_{\lambda}$,
\item if $\lambda<\kappa$ is a limit ordinal, then $A_{\lambda}=\bigcap_{\mu<\lambda}A_{\mu}$,
\item if $(c,d)\in\mci(A_{\lambda+1})$ contains $(a,b)\in\mci(A_{\lambda})$, then $\alpha|_{[c,a]}\simeq \beta|_{[c,a]}$ when $c<a$ and $\alpha|_{[b,d]}\simeq\beta|_{[b,d]}$ when $b<d$.
\end{enumerate}
Then $A_{\kappa}=\bigcap_{\lambda<\kappa}A_{\lambda}$ is a homotopy cut-set for $\alpha$ and $\beta$.
\end{lemma}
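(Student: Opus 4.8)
The plan is to establish, by transfinite induction on $\mu\le\kappa$, the auxiliary statement $Q_\mu$: \emph{$A_\mu$ is a homotopy cut-set for $\alpha$ and $\beta$, and whenever $\lambda<\mu$ and a component $(C,D)\in\mci(A_\mu)$ contains a component $(a,b)\in\mci(A_\lambda)$, we have $\alpha|_{[C,a]}\simeq\beta|_{[C,a]}$ if $C<a$, and $\alpha|_{[b,D]}\simeq\beta|_{[b,D]}$ if $b<D$.} Since $\kappa$ is itself a limit ordinal, the first clause of $Q_\kappa$ is precisely the conclusion of the lemma. For every $\mu\le\kappa$ the set $A_\mu\subseteq A_0$ is automatically closed, nowhere dense, contains $\{0,1\}$, and satisfies $\alpha|_{A_\mu}=\beta|_{A_\mu}$; and for $\mu<\kappa$ the homotopy-cut-set property is given by hypothesis. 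So only two things remain: the second clause of $Q_{\nu+1}$ (assuming $Q_\nu$), and both clauses of $Q_\mu$ for limit $\mu$ (assuming $Q_\nu$ for all $\nu<\mu$), the latter including $\mu=\kappa$.

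For the successor step, suppose $(C,D)\in\mci(A_{\nu+1})$ contains $(a,b)\in\mci(A_\lambda)$ with $\lambda\le\nu$. Since $A_{\nu+1}\subseteq A_\nu\subseteq A_\lambda$, the interval $(a,b)$ misses $A_\nu$, hence lies in a component $(C',D')\in\mci(A_\nu)$ with $(a,b)\subseteq(C',D')\subseteq(C,D)$. Hypothesis (3) applied to $(C,D)\supseteq(C',D')$, and $Q_\nu$ applied to $(C',D')\supseteq(a,b)$, then supply path-homotopies on $[C,C']$ and $[C',a]$ whose concatenation gives $\alpha|_{[C,a]}\simeq\beta|_{[C,a]}$; the estimate on $[b,D]$ is symmetric. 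This is $Q_{\nu+1}$.

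The limit step is the heart of the argument. Fix a limit $\mu\le\kappa$, assume $Q_\nu$ for all $\nu<\mu$, and let $(c,d)\in\mci(A_\mu)$; we prove $\alpha|_{[c,d]}\simeq\beta|_{[c,d]}$. If some $A_\nu$ misses $(c,d)$ we are done by $Q_\nu$, so assume not. The sets $A_\nu\cap[c,d]$ form a decreasing family of nonempty compacta with intersection $A_\mu\cap[c,d]=\{c,d\}$, so for each $\delta\in(0,\tfrac{d-c}{2})$ some $A_\nu$ misses $[c+\delta,d-\delta]$; choosing a cofinal increasing $\nu_0<\nu_1<\cdots\to\mu$ and a strictly decreasing null sequence $\epsilon_k\in(0,\tfrac{d-c}{2})$ with $A_{\nu_k}\cap[c+\epsilon_k,d-\epsilon_k]=\emptyset$, we set $p_k=\max(A_{\nu_k}\cap[c,c+\epsilon_k])$ and $q_k=\min(A_{\nu_k}\cap[d-\epsilon_k,d])$. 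Then $(p_k,q_k)\in\mci(A_{\nu_k})$, $[c+\epsilon_k,d-\epsilon_k]\subseteq(p_k,q_k)\subseteq(c,d)$, $p_k\to c$, $q_k\to d$, and $(p_k,q_k)\subseteq(p_{k+1},q_{k+1})$ (the last inclusion since $(p_k,q_k)$, being disjoint from $A_{\nu_{k+1}}$, lies in the component of $[0,1]\setminus A_{\nu_{k+1}}$ containing $[c+\epsilon_{k+1},d-\epsilon_{k+1}]$). Discarding finitely many terms, we may assume the $p_k$ are all equal to $c$ or strictly decreasing, and likewise for the $q_k$ relative to $d$; in an ``all equal'' case $(c,d)$ is some $(p_k,q_k)\in\mci(A_{\nu_k})$ and $Q_{\nu_k}$ finishes it. Otherwise $S=\{c,\dots,p_1,p_0,q_0,q_1,\dots,d\}$ is a scattered cut-set, and it is a homotopy cut-set for $\alpha|_{[c,d]}$ and $\beta|_{[c,d]}$: the component $(p_0,q_0)\in\mci(A_{\nu_0})$ is covered by $Q_{\nu_0}$, and for the gap $(p_{k+1},p_k)$ we note $(p_k,q_k)\in\mci(A_{\nu_k})$ lies inside $(p_{k+1},q_{k+1})\in\mci(A_{\nu_{k+1}})$, so the second clause of $Q_{\nu_{k+1}}$ (with $\lambda=\nu_k$) gives $\alpha|_{[p_{k+1},p_k]}\simeq\beta|_{[p_{k+1},p_k]}$; the gaps $(q_k,q_{k+1})$ are symmetric. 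Since $X$ has well-defined scattered $\Pi_1$-products, $\alpha|_{[c,d]}\simeq\beta|_{[c,d]}$. The second clause of $Q_\mu$ is obtained by the same construction: given $(C,D)\in\mci(A_\mu)$ containing $(a,b)\in\mci(A_\lambda)$, let $(C_\nu,D_\nu)\in\mci(A_\nu)$ be the component containing $(a,b)$ for $\lambda\le\nu<\mu$; closedness of each $A_\nu$ together with $A_\mu\cap(C,D)=\emptyset$ forces $C_\nu\downarrow C$ and $D_\nu\uparrow D$, so a cofinal sequence of the $C_\nu$ (together with $a$ and $C$) is a scattered homotopy cut-set for $\alpha|_{[C,a]}$ and $\beta|_{[C,a]}$ — again using the second clause of $Q_{\nu_{k+1}}$ on the gaps — and symmetrically on the right.

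The main obstacle is entirely the limit step, where two difficulties must be handled. First, $c$ (and $d$) may be a two-sided limit point of every $A_\nu$ with $\nu<\mu$, so there is no ``first component inside $(c,d)$'' to latch onto; the compactness argument producing the nested exhaustion $(p_0,q_0)\subseteq(p_1,q_1)\subseteq\cdots$ of $(c,d)$ is what replaces it. Second, the homotopies on the telescoping gaps $[p_{k+1},p_k]$ are not supplied by the homotopy-cut-set property of any single $A_\nu$; they come exactly from the auxiliary second clause, which is why that clause must be built into $Q_\mu$ and carried through the induction in tandem with the homotopy-cut-set assertion rather than derived afterward.
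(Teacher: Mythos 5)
Your argument is correct, but it is organized differently from the paper's proof. The paper handles each component $(a_\kappa,b_\kappa)$ of $\ui\setminus A_\kappa$ in one shot: it fixes an anchor component $(a_0,b_0)\in\mci(A_0)$ inside $(a_\kappa,b_\kappa)$, tracks the nested components $(a_\lambda,b_\lambda)\in\mci(A_\lambda)$ containing it for \emph{all} $\lambda\le\kappa$, proves that the endpoint maps $\lambda\mapsto a_\lambda,b_\lambda$ are continuous at limit ordinals (so the components exhaust at limits), and concludes that the full transfinitely-indexed endpoint set $B=\{a_\lambda\}\cup\{b_\lambda\}$ is countable and compact, hence a scattered homotopy cut-set whose gaps are exactly the successor gaps $(a_{\lambda+1},a_\lambda)$, $(b_\lambda,b_{\lambda+1})$ plus $(a_0,b_0)$; hypothesis (3) then applies to each gap directly and scattered $\Pi_1$-products finishes, with no auxiliary induction. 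You instead run a transfinite induction carrying the strengthened statement $Q_\mu$ (propagating (3) from consecutive indices to arbitrary pairs $\lambda<\mu$), and at each limit you use countable cofinality plus a compactness/exhaustion argument to build only an $\omega$-indexed scattered cut-set $\{c,\dots,p_1,p_0,q_0,q_1,\dots,d\}$, with the gap homotopies supplied by the propagated clause rather than by (3) alone. The trade-off: the paper avoids the bookkeeping of $Q_\mu$ but must analyze a transfinite-length scattered set (including the continuity-at-limits verification), while your route only ever feeds cut-sets of order type essentially $\omega^*+\omega$ into the scattered-products hypothesis, at the cost of carrying the strengthened hypothesis through the induction. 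One cosmetic point: ``discarding finitely many terms'' does not make a non-increasing sequence with repetitions strictly decreasing --- pass to a subsequence, or simply note that repeated values produce degenerate gaps and are harmless; this does not affect the substance of the argument.
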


\begin{proof}
Clearly, $A_{\kappa}$ is closed and nowhere dense. Let $(a_{\kappa},b_{\kappa})$ be a component of $\ui\backslash A_{\kappa}$. We must show that $\alpha|_{[a_{\kappa},b_{\kappa}]}$ is path-homotopic to $\beta|_{[a_{\kappa},b_{\kappa}]}$. Fix any $(a_0,b_0)\in\mci(A_0)$ such that $(a_0,b_0)\subseteq (a_{\kappa},b_{\kappa})$. If $\lambda\leq\kappa$, let $(a_{\lambda},b_{\lambda})\in \mci(A_{\lambda})$ be the unique interval such that $(a_0,b_0)\subseteq (a_{\lambda},b_{\lambda})\subseteq (a_{\kappa},b_{\kappa})$. Note that the transfinite sequences $\{a_{\lambda}\}_{\lambda<\kappa}$ and $\{b_{\lambda}\}_{\lambda<\kappa}$ are monotone but need not be strictly monotone.

We take a moment to verify that, for each limit ordinal $\lambda\leq \kappa$, we have $\bigcup_{\mu<\lambda}(a_{\mu},b_{\mu})=(a_{\lambda},b_{\lambda})$. If we set $\bigcup_{\mu<\lambda}(a_{\mu},b_{\mu})=(s,t)$, then $(s,t)\subseteq (a_{\lambda},b_{\lambda})$ is clear. If $t<b_{\lambda}$, then $t\notin A_{\lambda}$ since $(a_{\lambda},b_{\lambda})$ is a component of $\ui\backslash A_{\lambda}$. Thus $t\notin A_{\mu_0}$ for some $\mu_0<\lambda$. Let $(c_{\mu_0},d_{\mu_0})\in\mci(A_{\mu_0})$ be the component containing $t$ and note $a_{\lambda}<c_{\mu_0}<t<d_{\mu_0}<b_{\lambda}$. Since $\{b_{\mu}\}_{\mu<\lambda}$ is non-decreasing and converges to $t$, there exists, $\mu_1>\mu_0$ such that $c_{\mu_0}<b_{\mu_1}<t<d_{\mu_0}$. However, this is impossible since we cannot have $b_{\mu_1}\in A_{\mu_1}\subseteq A_{\mu_0}$ and $b_{\mu_1}\in (c_{\mu_0},d_{\mu_0})$. We conclude that $t=b_{\lambda}$. The symmetric argument shows $s=a_{\lambda}$.

The previous paragraph directly implies that the non-increasing function $\kappa+1\to \ui$, given by $\lambda\mapsto a_{\lambda}$, is continuous. The image $\{a_{\lambda}\mid \lambda\leq \kappa\}$ is countable and compact and thus a scattered subset of $\ui$. Similarly, $\lambda\mapsto b_{\lambda}$ is non-decreasing and continuous and the set $\{b_{\lambda}\mid \lambda\leq \kappa\}$ is scattered. The union $B=\{a_{\lambda}\mid \lambda\leq \kappa\}\cup \{b_{\lambda}\mid \lambda\leq\kappa\}$ must be scattered as well.

Every component of $[a_{\kappa},b_{\kappa}]\backslash B$, which is not $(a_0,b_0)$ must be of the form $(a_{\lambda+1},a_{\lambda})$ or $(b_{\lambda},b_{\lambda+1})$ for some $\lambda<\kappa$. By Condition (3) in the statement of the lemma, we have $\alpha|_{[a_{\lambda+1},a_{\lambda}]}\simeq \beta|_{[a_{\lambda+1},a_{\lambda}]}$ if $a_{\lambda+1}<a_{\lambda}$ and $\alpha|_{[b_{\lambda},b_{\lambda+1}]}\simeq \beta|_{[b_{\lambda},b_{\lambda+1}]}$ if $b_{\lambda}<b_{\lambda+1}$. Therefore, $B$ is a scattered homotopy cut-set for $\alpha|_{[a_{\kappa},b_{\kappa}]}$ and $\beta|_{[a_{\kappa},b_{\kappa}]}$. Since $X$ is assumed to have well-defined scattered $\Pi_1$-products, we conclude that $\alpha|_{[a_{\kappa},b_{\kappa}]}\simeq\beta|_{[a_{\kappa},b_{\kappa}]}$
\end{proof}

\begin{theorem}\label{bigtheorem}
Suppose $X$ has well-defined transfinite $\pi_1$-products. If $A$ is a homotopy cut-set for paths $\alpha,\beta:\ui\to X$, then either $\alpha\simeq\beta$ or there exists a non-empty perfect set $P\subseteq A$ such that 
\begin{enumerate}
\item $B=P\cup\{0,1\}$ is a homotopy cut-set for $\alpha$ and $\beta$,
\item $\{t\in B\cap (0,1)\mid \alpha(t)\text{ is isolated in }\alpha(B)\}=\emptyset$.
\end{enumerate} 
\end{theorem}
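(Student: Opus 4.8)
The plan is to reach such a $B$ by a transfinite recursion on $A$ that repeatedly applies two reduction moves — passing to the perfect kernel via Lemma \ref{perfectreductionlemma}, and deleting the preimage of an isolated image point via Lemma \ref{inductivesteplemma} — while taking intersections at limit stages and invoking Lemma \ref{limitordinallemma} to guarantee that these limits are again homotopy cut-sets. We may assume $X$ is path connected, since $\alpha(\ui)$, $\beta(\ui)$, and every path-homotopy used below lie in a single path component, which inherits well-defined transfinite $\pi_1$-products from $X$; consequently $X$ has well-defined scattered $\pi_1$-products and hence, by Theorem \ref{scatteredproducthm}, well-defined scattered $\Pi_1$-products, so Lemmas \ref{perfectreductionlemma} and \ref{limitordinallemma} are available (Lemma \ref{inductivesteplemma} applies directly from the hypothesis).

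The recursion will begin at $A_0=A$. At a stage $\lambda$ with current homotopy cut-set $A_\lambda\subseteq A$ for $\alpha$ and $\beta$, proceed as follows. If $\alpha(A_\lambda)$ is a single point, then $A_\lambda$ is a homotopy cut-set for two loops based at that point, so $\alpha\simeq\beta$ by well-defined transfinite $\pi_1$-products and we halt. Otherwise, if $A_\lambda$ is not of the form $P\cup\{0,1\}$ with $P$ non-empty and perfect, apply Lemma \ref{perfectreductionlemma} to $A_\lambda$ and either halt (if it yields $\alpha\simeq\beta$) or set $A_{\lambda+1}=P\cup\{0,1\}$ for the non-empty perfect $P\subseteq A_\lambda$ it produces. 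Otherwise $A_\lambda=P\cup\{0,1\}$ with $P$ non-empty perfect and $|\alpha(A_\lambda)|\geq 2$, and we ask whether some isolated point $x$ of $\alpha(A_\lambda)$ lies in $\alpha(A_\lambda\cap(0,1))$: if so, apply Lemma \ref{inductivesteplemma} with that $x$ (legitimate since $\alpha(A_\lambda)\setminus\{x\}\neq\emptyset$) to obtain $A_{\lambda+1}\subseteq A_\lambda$; if not, halt with $B=A_\lambda$. At a limit ordinal $\mu$ put $A_\mu=\bigcap_{\lambda<\mu}A_\lambda$. Every successor step keeps $A_{\lambda+1}\subseteq A_\lambda$ a homotopy cut-set for $\alpha$ and $\beta$ and satisfies the component condition that when $(c,d)\in\mci(A_{\lambda+1})$ contains $(a,b)\in\mci(A_\lambda)$ one has $\alpha|_{[c,a]}\simeq\beta|_{[c,a]}$ for $c<a$ and $\alpha|_{[b,d]}\simeq\beta|_{[b,d]}$ for $b<d$ — this is Lemma \ref{perfectreductionlemma}(2) for a perfecting move and Lemma \ref{inductivesteplemma}(3) for a deletion move — so the hypotheses of Lemma \ref{limitordinallemma} hold for $\{A_\lambda\}_{\lambda<\mu}$, and $A_\mu$ is again a homotopy cut-set. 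Thus the recursion is well-posed.

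It remains to check that the recursion halts at a countable ordinal, so that every limit ordinal encountered is countable (as Lemma \ref{limitordinallemma} requires) and the process actually terminates. The families $\lambda\mapsto\alpha(A_\lambda)$ and $\lambda\mapsto\alpha(A_\lambda\cap(0,1))$ are non-increasing, and since $\alpha(A_\nu)=\alpha(A_\nu\cap(0,1))\cup\{\alpha(0),\alpha(1)\}$ for every $\nu$, a deletion move removing $x$ with $x\notin\{\alpha(0),\alpha(1)\}$ permanently drops $x$ from the closed set $\alpha(A_\lambda)\subseteq\alpha(A)$, by Lemma \ref{inductivesteplemma}(2) and monotonicity. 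Since $\alpha(A)$ is a compact metric space it has a countable basis, and a non-increasing transfinite sequence of its closed subsets can strictly decrease only countably often — each strict decrease is witnessed by a fresh basic open set meeting the current term but not the next. Hence there are only countably many deletion moves removing a point other than $\alpha(0)$ or $\alpha(1)$; and because a deleted point never re-enters any $\alpha(A_\nu\cap(0,1))$, at most two further deletion moves can occur. Finally, a perfecting move produces a cut-set of the form $P\cup\{0,1\}$, so at the next stage the recursion either halts or performs a deletion move; hence the number of perfecting moves exceeds that of deletion moves by at most one. Altogether there are only countably many successor steps, so the recursion terminates at some countable ordinal $\gamma$. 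Either we have halted with $\alpha\simeq\beta$, or $B=A_\gamma=P\cup\{0,1\}$ with $P$ non-empty perfect, $B$ is a homotopy cut-set for $\alpha$ and $\beta$ by construction, and — because we halted in the last clause — no isolated point of $\alpha(B)$ lies in $\alpha(B\cap(0,1))$, which is exactly (1) and (2).

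I expect the main obstacle to be the termination argument of the last paragraph: one must recognize that, apart from the at most two moves eliminating $\alpha(0)$ or $\alpha(1)$, each application of Lemma \ref{inductivesteplemma} permanently shrinks the closed set $\alpha(A_\lambda)$ inside the second-countable space $\alpha(A)$, so the recursion cannot run through uncountably many successor ordinals. The secondary subtlety is keeping the component condition (Lemma \ref{perfectreductionlemma}(2) and Lemma \ref{inductivesteplemma}(3)) available at every successor step so that Lemma \ref{limitordinallemma} can legitimately be invoked at the limit stages.
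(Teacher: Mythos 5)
Your overall strategy is the paper's: alternate the deletion move of Lemma \ref{inductivesteplemma} with the Cantor--Bendixson move of Lemma \ref{perfectreductionlemma}, intersect at limit stages, and use the component condition from Lemma \ref{perfectreductionlemma}(2) and Lemma \ref{inductivesteplemma}(3) to invoke Lemma \ref{limitordinallemma} at countable limits. That skeleton, including the reduction to a path component so that Theorem \ref{scatteredproducthm} supplies well-defined scattered $\Pi_1$-products, is sound and matches the paper's proof.

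The gap is in your termination argument. You run the counting inside $\alpha(A)$, asserting that $\alpha(A)$ is a compact metric space and that each $\alpha(A_\lambda)$ is a closed subset of it, so that every strict decrease is witnessed by a fresh basic open set. But the theorem is stated for an arbitrary space $X$ with no separation axioms (the paper is explicit about this, e.g.\ in the discussion surrounding Theorem \ref{maintechthm}); if $X$ is not Hausdorff, $\alpha(A)$ need not be metrizable or second countable as a subspace of $X$, and the continuous images $\alpha(A_\lambda)$ of compact sets need not be closed in $\alpha(A)$, so the ``fresh basic open set'' witness for each strict decrease is not available. As written, your argument only establishes the countability of the recursion (and hence the applicability of Lemma \ref{limitordinallemma}, which requires countable limit ordinals) when $X$ is Hausdorff. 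The repair is immediate and is what the paper does: count in the domain rather than in the image. Every non-halting successor step strictly shrinks $A_\lambda$ itself --- a deletion move removes the point $t\in A_\lambda\cap(0,1)$ with $\alpha(t)=x$, and a perfecting move is only performed when $A_\lambda$ is not already of the form $P\cup\{0,1\}$ --- and a strictly decreasing transfinite chain of closed subsets of the second countable space $\ui$ must be countable and stabilize at a countable ordinal \cite{KechrisDST}. With that substitution your bookkeeping (at most one more perfecting move than deletion moves, and the endpoint values $\alpha(0),\alpha(1)$ needing no special treatment) goes through and the rest of your proof is correct.
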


\begin{proof}
Let $A_0=B_0=A$, $P_0=\emptyset$, and set $Y_0=\alpha(B_0)$. We will construct two descending transfinite sequences $\{A_{\lambda}\}$ and $\{B_{\lambda}\}$ of homotopy cut-sets for $\alpha$ and $\beta$ that will nest in an alternating pattern. We will also define $B_{\lambda}=P_{\lambda}\cup\{0,1\}$ for a perfect set $P_{\lambda}$ and $Y_{\lambda}=\alpha(B_{\lambda})=\beta(B_{\lambda})$.

Suppose $A_{\lambda}$, $B_{\lambda}$, $P_{\lambda}$, and $Y_{\lambda}$ have been defined. If $$\{t\in B_{\lambda}\cap (0,1)\mid \alpha(t)\text{ is isolated in }\alpha(B_{\lambda})\}=\emptyset$$ or if $|\alpha(B_{\lambda})|=1$, let $A_{\lambda+1}=B_{\lambda+1}=B_{\lambda}$. Otherwise, find $t\in B_{\lambda}$ such that $\alpha(t)=y_{\lambda+1}$ is an isolated point of $Y_{\lambda}$. Since $|\alpha(B_{\lambda})|>1$, we have $\alpha(B_{\lambda})\backslash\{y_{\lambda+1}\}\neq\emptyset$ and we may apply Lemma \ref{inductivesteplemma}, to find a homotopy cut-set $A_{\lambda+1}\subseteq B_{\lambda}$ such that 
\begin{itemize}
\item $\alpha(A_{\lambda+1}\cap (0,1))\subseteq \alpha(B_{\lambda}\cap (0,1))\backslash \{y_{\lambda+1}\}$,
\item if $(a,b)\subseteq (c,d)$ for $(a,b)\in\mci(B_{\lambda})$ and $(c,d)\in \mci(A_{\lambda+1})$, then $\alpha|_{[c,a]}\simeq \beta|_{[c,a]}$ and $\alpha|_{[b,d]}\simeq \beta|_{[b,d]}$.
\end{itemize}
Next, by Lemma \ref{perfectreductionlemma}, we may find a perfect set $P_{\lambda+1}\subseteq A_{\lambda+1}$ such that 
\begin{itemize}
\item $B_{\lambda+1}=P_{\lambda+1}\cup\{0,1\}$ is a homotopy cut set for $\alpha$ and $\beta$,
\item if $(a,b)\subseteq (c,d)$ for $(a,b)\in\mci(A_{\lambda+1})$ and $(c,d)\in \mci(B_{\lambda+1})$, then $\alpha|_{[c,a]}\simeq \beta|_{[c,a]}$ and $\alpha|_{[b,d]}\simeq \beta|_{[b,d]}$.
\end{itemize}
Finally, set $Y_{\lambda+1}=\alpha(B_{\lambda+1})$. 

Before dealing with the limit ordinal case, note that we have constructed a transfinite sequences $\{A_{\lambda}\}$ and $\{B_{\lambda}\}$ satisfying $B_{\lambda+1}\subseteq A_{\lambda+1}\subseteq B_{\lambda}\subseteq A_{\lambda}$ for all ordinals $\lambda$. By combining the second bullet points in our choice of $A_{\lambda+1}$ and $B_{\lambda+1}$, it follows that if $(c',d')\subseteq (a,b)\subseteq (c,d)$ for $(c,d)\in\mci(B_{\lambda+1})$, $(a,b)\in\mci(A_{\lambda+1})$, and $(c',d')\in\mci(B_{\lambda})$, then $\alpha|_{[d',d]}\simeq\beta|_{[d',d]}$ when $d'<d$ and $\alpha|_{[c,c']}\simeq\beta|_{[c,c']}$ when $c<c'$.

If $\lambda$ is a limit ordinal, we set $A_{\lambda}=\bigcap_{\mu<\lambda}A_{\mu}$ and $B_{\lambda}=\bigcap_{\mu<\lambda}B_{\mu}$ (noting that $A_{\lambda}=B_{\lambda}$), and $Y_{\lambda}=\alpha(A_{\lambda})=\alpha(B_{\lambda})$. With this definition, $\{A_{\lambda}\}$ is a descending transfinite sequence of closed subsets in the second countable space $A$ and, therefore, must stabilize at a countable ordinal, i.e. there exists countable $\kappa$ such that $A_{\lambda}=A_{\kappa}$ for $\lambda\geq \kappa$ \cite{KechrisDST}. In particular, $A_{\kappa}\supseteq B_{\kappa}\supseteq A_{\kappa+1}$ and thus $A_{\kappa}=B_{\kappa}$. The previous paragraph allows us to apply Lemma \ref{limitordinallemma} to $\{B_{\mu}\}_{\mu<\lambda}$ for any countable limit ordinal $\lambda$. It follows that for any countable limit ordinal $\lambda$ (and thus for an arbitrary limit ordinal), $A_{\lambda}=B_{\lambda}$ is a homotopy cut-set for $\alpha$ and $\beta$.

We have already argued that $B=A_{\kappa}=B_{\kappa}$ is a homotopy cut-set for $\alpha$ and $\beta$. The first possibility is that $|\alpha(B)|=1$ in which case, $\alpha,\beta$ are loops. Since $X$ is assumed to have well-defined transfinite $\pi_1$-products, we have $\alpha\simeq\beta$. The second possibility is that $|\alpha(B)|>1$. If this occurs, then there can be no point $t\in B\cap (0,1)$ such that $\alpha(t)$ is an isolated point of $\alpha(B)$ since our construction would force $A_{\kappa+1}$ to be a proper subset of $B_{\kappa}$. Finally, if $P_{\kappa}=\emptyset$, we have $B=\{0,1\}$ and thus $\alpha\simeq\beta$. Otherwise, $B=P_{\kappa}\cup \{0,1\}$ for the non-empty perfect set $P_{\kappa}$.
\end{proof}

\section{A Proof of Theorem \ref{mainthm}}\label{finalsection}

The following theorem is the culmination of the sequence of lemmas in the previous section. Recall that a space $W$ is \textit{dense-in-itself} if $W$ has no isolated points. We make a point to distinguish a space $W$ being dense-in-itself and $W$ being a perfect subset of a space $X$ since the space $\awild(X)$ need not be closed in $X$ and since we are not assuming that $X$ satisfies any separation axioms.

\begin{theorem}\label{maintechthm}
Suppose $X$ has well-defined transfinite $\pi_1$-products and $x_0\notin \awild(X)$ is a point such that $\{x_0\}$ is closed in $X$. If $A$ is a homotopy cut-set for loops $\alpha$ and $\beta$ based at $x_0$, then either $\alpha\simeq\beta$ or there exists a homotopy cut-set $B\subseteq A$ for $\alpha$ and $\beta$ such that
\begin{enumerate}
\item $B\backslash\{0,1\}$ is non-empty and perfect,
\item $\alpha(B\backslash \{0,1\})=\beta(B\backslash \{0,1\})\subseteq \awild(X)$,
\item $\alpha(B\backslash \{0,1\})=\beta(B\backslash \{0,1\})$ is dense-in-itself.
\end{enumerate}
\end{theorem}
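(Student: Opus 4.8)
The plan is to feed the cut-set $A$ through the reduction machinery of Section~\ref{technicalsection} in a carefully chosen order, reserving the hypotheses $x_0\notin\awild(X)$ and $\{x_0\}$ closed for the final verification. First note that, by Definition~\ref{fourdefs}, well-defined transfinite $\pi_1$-products imply well-defined scattered $\pi_1$-products, hence, by Theorem~\ref{scatteredproducthm}, well-defined scattered $\Pi_1$-products; thus all the lemmas of Section~\ref{technicalsection} are at our disposal.

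Since $\alpha(0)=\alpha(1)=x_0\notin\awild(X)$, Lemma~\ref{isolatedendpoints}(3) produces a homotopy cut-set $A_1\subseteq A$ for $\alpha$ and $\beta$ in which both $0$ and $1$ are isolated. Applying Lemma~\ref{awildsetreductiongoodlemma} to $A_1$ gives a homotopy cut-set $A_2\subseteq A_1$ with $\alpha(A_2\cap(0,1))=\beta(A_2\cap(0,1))\subseteq\awild(X)$; inspecting the construction in that lemma, only points of $A_1\cap(0,1)$ are deleted, so $0$ and $1$ remain isolated in $A_2$. Now apply Theorem~\ref{bigtheorem} to $A_2$: if it yields $\alpha\simeq\beta$ we are finished, and otherwise it produces a non-empty perfect set $P\subseteq A_2$ such that $B:=P\cup\{0,1\}$ is a homotopy cut-set for $\alpha$ and $\beta$ and no $t\in B\cap(0,1)$ has $\alpha(t)$ isolated in $\alpha(B)$.

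I would then verify (1)--(3) for this $B$. Because $0$ and $1$ are isolated in $A_2\supseteq P$ while the perfect set $P$ has no isolated points, we have $0,1\notin P$; hence $P\subseteq(0,1)$ and $B\setminus\{0,1\}=P$ is non-empty and perfect, which is~(1). Consequently $\alpha(B\setminus\{0,1\})=\alpha(P)\subseteq\alpha(A_2\cap(0,1))\subseteq\awild(X)$, and $\alpha(P)=\beta(P)$ since $P\subseteq A_2$, giving~(2). For~(3), observe that $\alpha(B)=\alpha(P)\cup\{x_0\}$ because $\alpha(0)=\alpha(1)=x_0$, and that $x_0\notin\alpha(P)$ because $\alpha(P)\subseteq\awild(X)$ while $x_0\notin\awild(X)$. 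Fix $y\in\alpha(P)$, choose $t\in P$ with $\alpha(t)=y$ (so $t\in B\cap(0,1)$), and let $U$ be any open neighborhood of $y$ in $X$. By the property of $B$ from Theorem~\ref{bigtheorem}, $y$ is not isolated in $\alpha(B)$. Since $y\neq x_0$ and $\{x_0\}$ is closed, $U\setminus\{x_0\}$ is again an open neighborhood of $y$, so it contains a point of $\alpha(B)\setminus\{y\}=(\alpha(P)\setminus\{y\})\cup\{x_0\}$ distinct from $x_0$, i.e.\ a point of $\alpha(P)\setminus\{y\}$. Thus $y$ is not isolated in $\alpha(P)$, and since $y$ was arbitrary, $\alpha(P)=\alpha(B\setminus\{0,1\})$ is dense-in-itself, which is~(3).

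The step I expect to require the most care is~(3). The image $\alpha(B)$ always carries the extra point $x_0$ alongside the $\awild(X)$-valued points $\alpha(P)$, and passing from $\alpha(B)$ to $\alpha(P)=\alpha(B)\setminus\{x_0\}$ could \emph{a priori} create isolated points; the hypothesis that $\{x_0\}$ is closed is precisely what prevents this, by allowing any neighborhood of a point of $\alpha(P)$ to be shrunk away from $x_0$. The other delicate point is the demand in~(1) that $B\setminus\{0,1\}$ be \emph{perfect} and not merely dense-in-itself; this forces $0,1\notin P$, and is the reason Lemma~\ref{isolatedendpoints} must be invoked \emph{before} Theorem~\ref{bigtheorem}, so that $0$ and $1$ are isolated in $A_2$ and hence, since $P$ is perfect, cannot belong to $P$.
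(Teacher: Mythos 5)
Your proposal is correct and follows essentially the same route as the paper's proof: Lemma \ref{isolatedendpoints} to isolate the endpoints, Lemma \ref{awildsetreductiongoodlemma} to push the image of the interior cut points into $\awild(X)$, and Theorem \ref{bigtheorem} to extract the perfect set $P$, with the same verification that $0,1\notin P$. Your direct check of condition (3) via the neighborhood $U\setminus\{x_0\}$ is just a contrapositive rephrasing of the paper's contradiction argument and uses the closedness of $\{x_0\}$ in exactly the same way.
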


\begin{proof}
Since $x_0\notin\awild(X)$, Lemma \ref{isolatedendpoints} allows us to find a homotopy cut-set $B_1\subseteq A$ for $\alpha$ and $\beta$ such that $0$ and $1$ are isolated in $B_1$, i.e. $B_1\backslash \{0,1\}$ is closed. By Lemma \ref{awildsetreductiongoodlemma}, there is a homotopy cut-set $B_2\subseteq B_1$ for $\alpha$ and $\beta$ such that $\alpha(B_2\backslash\{0,1\})=\beta(B_2\backslash\{0,1\})\subseteq \awild(X)$. By Theorem \ref{bigtheorem}, either $\alpha\simeq\beta$ or there exists a non-empty perfect set $P\subseteq B_2$ such that $B_3=P\cup \{0,1\}$ is a homotopy cut-set for $\alpha$ and $\beta$ and such that $\{t\in B_3\backslash\{0,1\}\mid \alpha(t)\text{ is isolated in }\alpha(B_3)\}=\emptyset$. Since $B_3\subseteq B_1$, both $0$ and $1$ are isolated points in $B_3$ and $P\subseteq (0,1)$. Thus $B_3\backslash \{0,1\}=P$ is perfect. Since $B_3\subseteq B_2$, we have $\alpha(B_3\backslash\{0,1\})=\beta(B_3\backslash\{0,1\})\subseteq \awild(X)$. 

Finally, suppose, to obtain a contradiction, that we have a point $t\in P$ such that $\alpha(t)$ is isolated in $\alpha(P)$. By our choice of $B_3$, the point $\alpha(t)$ is not isolated in $\alpha(B_3)$. Since $\{\alpha(t)\}$ is open in $\alpha(P)$ but not in $\alpha(B_3)=\alpha(P)\cup\{x_0\}$, we have $x_0\notin \alpha(P)$. Write $\{\alpha(t)\}=V\cap \alpha(P)$ for an open set $V$ in $\alpha(B_3)$. This forces $V=\{x_0,\alpha(t)\}$. However, since $\{x_0\}$ is closed in $X$, the complement $\alpha(B_3)\backslash \{x_0\}$ is open in $\alpha(B_3)$. Thus $V\cap (\alpha(B_3)\backslash \{x_0\})=\{\alpha(t)\}$ is open in $\alpha(B_3)$; a contradiction. We conclude that $B=B_3$ is the desired homotopy cut-set.
\end{proof}

Assuming the well-definedness of transfinite $\pi_1$-products, we consider Theorem \ref{maintechthm} to be an optimal reduction result. This claim is justified in part by the example used to prove \cite[Theorem 1.1]{BrazDense}, which is a locally path-connected metric space $X$ for which $\awild(X)$ is perfect (namely, a Cantor set) and for which $X$ has well-defined transfinite $\pi_1$-products but does not have well-defined transfinite $\Pi_1$-products. Prior to our proof of Theorem \ref{mainthm}, we provide a brief summary.

\begin{remark}
Recall that a cut-set $A$ for paths $\alpha,\beta:\ui\to X$ corresponds to path-factorizations $\prod_{I\in \mci(A)}\alpha|_{\ov{I}}$ and $\prod_{I\in \mci(A)}\beta|_{\ov{I}}$ over the cut-map $\alpha|_{A}=\beta|_{A}:A\to X$. If $A$ is a homotopy cut-set, then $[\alpha|_{\ov{I}}]=[\beta|_{\ov{I}}]$ for all $I\in\mci(A)$. Many of the points $t\in A\cap(0,1)$ may be deleted to form the homotopy cut-set $B\subseteq A$ for $\alpha$ and $\beta$ described in Theorem \ref{maintechthm}. The deletion of $t$ may be possible due to one of the following reasons:
\begin{enumerate}
\item $t$ lies in the maximal scattered subset of $A$, in which case $t$ can be removed as part of the reduction process that inductively combines consecutive elements of $\mci(A)$. This is possible since, by Theorem \ref{scatteredproducthm}, $X$ necessarily has well-defined scattered $\Pi_1$-products.
\item $\alpha(t)\notin \awild(X)$ in which case $t$ does not contribute to algebraic wildness,
\item $\alpha(t)$ lies in the maximal scattered subset of $\awild(X)$, in which case $t$ may be removed as part of an inductive process that applies the well-definedness of transfinite $\pi_1$-products at these points.
\end{enumerate}
Interestingly, the author was not able prove Theorem \ref{maintechthm} by invoking the transfinite reductions described in (1) and (3) separately, hence, the alternating transfinite induction used in the proof of Theorem \ref{bigtheorem}. Theorem \ref{maintechthm} allows us to make the reduction from $A$ to $B$ in one step. Once we have $B=P\cup\{0,1\}$ for perfect set $P$ and if $a=\min(P)$, $b=\max(P)$, then we may write $\alpha\equiv \alpha|_{[0,a]}\cdot\left( \prod_{J\in \mci(P)}\alpha|_{\ov{J}}\right)\cdot \alpha|_{[b,1]}$ and $\beta\equiv \beta|_{[0,a]}\cdot\left( \prod_{J\in \mci(P)}\beta|_{\ov{J}}\right)\cdot \beta|_{[b,1]}$ where the first and last factors may or may not be degenerate depending on whether or not $0,1$ lie in $P$. Moreover, $[\alpha|_{\ov{J}}]=[\beta|_{\ov{J}}]$ for all $J\in\mci(B)$ and the image of $\alpha|_{P}=\beta|_{P}$ is a dense-in-itself (and perfect if $X$ is Hausdorff) subspace of $\awild(X)$.
\end{remark}

\begin{proof}[Proof of Theorem \ref{mainthm}]
Since loops are paths, the well-definedness of transfinite $\Pi_1$-products implies the well-definedness of $\pi_1$-products for all spaces. Let $X$ be a space with well-defined $\pi_1$-products and such that $\awild(X)$ is scattered. Fix a point $a_0\in X$. By restricting to the path component of $a_0$, we may assume that $X$ is path connected. By Remark \ref{homotopyinvarianceremark}, we may replace $X$ with the homotopy equivalent space $X\cup [0,1]/\sim$ obtained by attaching a copy of $\ui$ to $X$ by identifying $0\sim a_0$ and taking the image of $1$ to be the new basepoint $x_0$. Note that $\{x_0\}$ is closed in $X$ and that we have $x_0\notin\awild(X)$ since $X$ is locally contractible at $x_0$.

Suppose $A$ is a homotopy cut-set for paths $\alpha,\beta:\ui\to X$. Let $\gamma_1$ and $\gamma_2$ be paths in $X$ from $x_0$ to $\alpha(0)=\beta(0)$ and $\alpha(1)=\beta(1)$ respectively. Then $\{0,1\}\cup \{\frac{t+1}{3}\mid t\in A\}$ is a homotopy cut-set for the loops $\gamma_1\cdot\alpha\cdot\gamma_{2}^{-}$ and $\gamma_1\cdot\beta\cdot\gamma_{2}^{-}$ based at $x_0$. Replacing $\alpha$ and $\beta$ with these loops and $A$ with the corresponding homotopy-cut set allows us to assume from the start that $\alpha$ and $\beta$ are loops based at $x_0$. We may now apply Theorem \ref{maintechthm} from which we have that either $\alpha\simeq\beta$ or there exists a homotopy cut-set $B\subseteq A$ for $\alpha$ and $\beta$ such that $\alpha(B\backslash \{0,1\})=\beta(B\backslash \{0,1\})$ is a dense-in-itself set contained in the scattered set $\awild(X)$. However, the only dense-in-itself subspace of a scattered space is the empty set. Thus, $B\backslash \{0,1\}=\emptyset$. Since $B=\{0,1\}$ is a homotopy cut-set for $\alpha$ and $\beta$, we conclude that $\alpha\simeq\beta$.
\end{proof}

\end{document}